\newtheorem{thm}{Theorem}[section]
\newtheorem{conj}[thm]{Conjecture}
\newtheorem{lem}[thm]{Lemma}
\newtheorem{prop}[thm]{Proposition}
\newtheorem{cor}[thm]{Corollary}
\theoremstyle{definition}
\newtheorem{rem}[thm]{Remark}
\newtheorem*{ack}{Acknowledgements}
\numberwithin{equation}{section}
\def\co{\colon\thinspace}
\newcommand{\C}{{\mathbb{C}}}
\newcommand{\N}{{\mathbb{N}}}
\newcommand{\R}{{\mathbb{R}}}
\newcommand{\bp}{{\mathbf{p}}}
\newcommand{\bq}{{\mathbf{q}}}
\newcommand{\Z}{{\mathbb{Z}}}
\DeclareMathOperator{\coker}{\mathrm{coker}}
\DeclareMathOperator{\rank}{\mathrm{rank}}
\DeclareMathOperator{\id}{\mathrm{id}}
\DeclareMathOperator{\rot}{{\tt rot}}
\DeclareMathOperator{\SO}{\mathrm{SO}}
\DeclareMathOperator{\OO}{\mathrm{O}}
\DeclareMathOperator{\UU}{\mathrm{U}}
\DeclareMathOperator{\GL}{\mathrm{GL}}
\DeclareMathOperator{\open}{Open}
\DeclareMathOperator{\tb}{{\tt tb}}
\begin{document}

\title{Diagrams for contact $5$-manifolds}

\author[F. Ding]{Fan Ding}
\address{Department of Mathematics, Peking University,
Beijing 100871, P.~R. China}
\email{dingfan@math.pku.edu.cn}

\author[H. Geiges]{Hansj\"org Geiges}
\address{Mathematisches Institut, Universit\"at zu K\"oln,
Weyertal 86--90, 50931 K\"oln, Germany}
\email{geiges@math.uni-koeln.de}

\author[O. van Koert]{Otto van Koert}
\address{Department of Mathematical Sciences, Seoul National University,
San56-1 Shillim-dong Kwanak-gu, Seoul 151-747, Korea}
\email{okoert@snu.ac.kr}

\begin{abstract}
According to Giroux, contact manifolds can be described as
open books whose pages are Stein manifolds. For $5$-dimensional
contact manifolds the pages are Stein surfaces, which permit
a description via Kirby diagrams. We introduce handle moves on such
diagrams that do not change the corresponding contact manifold.
As an application, we derive classification results for subcritically Stein
fillable contact $5$-manifolds and characterise the standard contact
structure on the $5$-sphere in terms of such fillings. This characterisation
is discussed in the context of the Andrews--Curtis conjecture
concerning presentations of the trivial group. We further illustrate the use
of such diagrams by a covering theorem for
simply connected spin $5$-manifolds and a new existence proof for contact
structures on simply connected $5$-manifolds.
\end{abstract}

\date{}

\maketitle

\section{Introduction}
The aim of this paper is to develop a diagrammatic language for
$5$-dimensional contact manifolds. This is motivated by (but does not depend
on) the deep result of Giroux~\cite{giro02}, which says that any
closed contact manifold (in any odd dimension) admits an open book
decomposition adapted to the contact structure, where the pages are
Stein manifolds and the monodromy is a symplectic diffeomorphism. In the case
of a $5$-dimensional contact manifold, the pages are Stein surfaces.
These permit a description via Kirby diagrams, where the attaching circles
for the $2$-handles are Legendrian knots in the standard contact structure
on the boundary $\#_kS^1\times S^2$ of the $1$-handlebody.
Provided the monodromy is given as a product of Dehn twists
along Lagrangian spheres corresponding to the $2$-handles,
this too can be encoded in the Kirby diagram.

Since Legendrian knots are faithfully represented by their front
projection in the $2$-plane, we obtain a description of $5$-manifolds
in terms of $2$-dimensional diagrams.

The combination of stabilisations of the open book with handle
slides in the Stein page leads to a couple of moves on such diagrams
that do not change the contact $5$-manifold. These moves are introduced
in Section~\ref{section:moves}, after a brief discussion of open books
and their monodromy in Sections \ref{section:book}
and~\ref{section:monodromy}. We then present two simple
applications of these moves to diagrams without $1$-handles.
In Section~\ref{section:S2S3} we describe an integer family of
contact structures on $S^2\times S^3$ and $S^2\,\tilde\times\,S^3$,
the non-trivial $S^3$-bundle over~$S^2$. We also give a diagrammatic
proof of the diffeomorphism
\[ S^2\,\tilde\times\,S^3\,\#\,
S^2\,\tilde\times\,S^3\cong
S^2\times S^3\,\#\,
S^2\,\tilde\times\,S^3\]
and its contact analogue. In Section~\ref{section:no1handles} we
classify contact $5$-manifolds that admit Stein
fillings made up of a single $0$-handle and $2$-handles only.

In Section~\ref{section:subcritical} we turn our attention to general
subcritically Stein fillable contact $5$-manifolds. These can be
described by open books with trivial monodromy, whose
diagrammatic representations are particularly tractable.
We show how to implement the Tietze moves on presentations of
the fundamental group as diagrammatic moves.
As an application, we prove that subcritically
fillable contact $5$-manifolds are classified by their fundamental group,
up to connected sums with $S^2\times S^3$
and $S^2\,\tilde\times\,S^3$ (with their standard contact structures),
see Theorem~\ref{thm:classification}. There is a corresponding
classification of the subcritical Stein fillings
(Corollary~\ref{cor:classification_Stein}).

Roughly speaking, these results can be phrased as saying that
$6$-dimensional compact subcritical Stein manifolds are determined
by topological data. This contrasts sharply with the general situation
for Stein manifolds. In any even dimension $\geq 8$
there are countably many pairwise distinct Stein manifolds
of finite type, all diffeomorphic to Euclidean
space; this was proved by McLean~\cite{mcle09},
building on earlier work of Seidel and Smith~\cite{sesm05}.

In Section~\ref{section:S5} we specialise to subcritical Stein fillings
of the $5$-sphere and give a characterisation of the standard
contact structure on $S^5$ in terms of such fillings. We discuss
the relevance of this result in the context of the
Andrews--Curtis conjecture concerning presentations of the
trivial group.

Finally, in Section~\ref{section:simply} we exhibit
diagrams for some special simply connected $5$-manifolds. These diagrams
are then used to show that every $5$-dimensional simply connected
spin manifold is a double branched cover of the $5$-sphere.
A further application is a new proof that
every simply connected $5$-manifold admits a contact
structure in each homotopy class of almost contact structures.
\section{Open book decompositions}
\label{section:book}
In this section we review the basic aspects of the Giroux
correspondence between contact structures and open books.
We describe three essential operations on open books: Dehn--Seidel twist,
stabilisation, and open book connected sum. In the last part of this
section we recall how to compute the homology of open books.
\subsection{Open books}
\label{section:openbooks_defs}
Recall that a compact Stein manifold is a compact complex manifold
$\Sigma$ admitting a strictly plurisubharmonic function
$f\co\Sigma\rightarrow\R$ that is constant
on the boundary~$\partial\Sigma$ and has no
critical points there. Then the exact $2$-form
$i\partial\overline{\partial}f$ defines a symplectic structure
on~$\Sigma$ compatible with the complex structure. Notice that
compact Stein manifolds are in particular of {\bf finite type},
i.e.\ they have finite handlebody decompositions.

According to a fundamental theorem of Giroux~\cite{giro02},
any cooriented contact structure on a closed manifold is
supported by an open book decomposition of that manifold,
where the pages are compact Stein manifolds and the monodromy
is symplectic. For the purposes of the present article it suffices to
understand how one finds a contact structure adapted to a given open book
decomposition. Here we briefly recall this construction.

Let $(\Sigma,\omega=d\lambda)$ be a compact Stein manifold, and let
$\psi$ be a symplectomorphism of $(\Sigma ,\omega)$ equal to the identity
near~$\partial\Sigma$. By a lemma of Giroux, cf.~\cite[Lemma~7.3.4]{geig08},
we may assume without loss of generality that the
symplectomorphism is exact, that is, $\psi^* \lambda =\lambda+dh$
for some smooth function $h\co\Sigma\rightarrow\R^+$.

The mapping torus
\[ A:=\Sigma \times \R / (x,\varphi)\sim (\psi(x),\varphi-h(x))\]
carries the contact form $\lambda+d\varphi$.
Since $\psi$ equals the identity near~$\partial\Sigma$, we can glue $A$ to 
$B:=\partial \Sigma \times D^2$ along their common boundary
$\partial \Sigma\times S^1$. In terms of polar coordinates $(r,\varphi)$
on~$D^2$, one can define a contact form on $B$ by the ansatz
\[ h_1(r)\, \lambda|_{\partial \Sigma}+h_2(r)\, d\varphi.\]
With the functions $h_1$ and $h_2$ chosen as in
Figure~\ref{fig:functions1}, this will indeed be a contact form
that glues smoothly with $\lambda+d\varphi$ on~$A$, resulting in
a contact form $\alpha$ on $M:=A\cup_{\partial}B$.
This description of the manifold $M$ is called an {\bf open book
decomposition}. The codimension~2
submanifold $\partial\Sigma\times\{ 0\}\subset B\subset M$ is called the
{\bf binding} of the open book. Up to diffeomorphism,
$A$ can be identified with
\[ \Sigma\times [0,2\pi ]/(x,2\pi)\sim (\psi (x),0).\]
In terms of this description, the {\bf pages} of the
open book  are the codimension~$1$ submanifolds
\[ \Sigma\times\{\varphi\}\cup\partial\Sigma\times\{ re^{i\varphi}
\in D^2\co r\in [0,1]\},\]
which are diffeomorphic copies of~$\Sigma$. The map
$\psi$ is called the {\bf monodromy} of the open book.
For more details see \cite[Sections 4.4.2 and~7.3]{geig08}.

\begin{figure}[htp]
\labellist
\small\hair 2pt
\pinlabel $1/2$ [t] at 166 39 
\pinlabel $1/2$ [t] at 488 39 
\pinlabel $h_1$ [r] at 39 192
\pinlabel $h_2$ [r] at 361 192
\pinlabel $r$ [t] at 212 39
\pinlabel $r$ [t] at 534 39
\endlabellist
\centering
\includegraphics[scale=0.5]{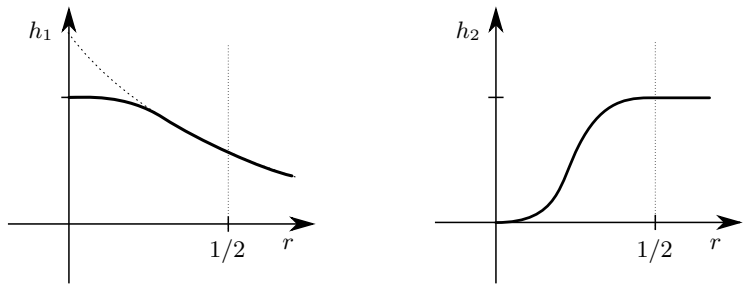}
  \caption{The functions $h_1$ and $h_2$.}
  \label{fig:functions1}
\end{figure}

It is not too difficult to see that the resulting contact manifold
$(M,\ker\alpha )$ is determined, up to contactomorphism,
by $\Sigma$ and~$\psi$. In fact, it is enough to know the
symplectomorphism type of the completion of $\Sigma$
in the sense of~\cite{elgr91}, see~\cite[Proposition~9]{giro02}.
For a compact Stein manifold,
the completion is simply the corresponding open Stein manifold.
We are therefore justified in denoting this contact manifold $(M,\ker\alpha)$
by $\open( \Sigma,\psi)$ and call it a {\bf contact open book}.
When $M$ is $5$-dimensional, the pages
$\Sigma$ of the open book are Stein surfaces, which allow a
description in terms of Kirby diagrams. This description of
contact $5$-manifolds will form the basis of our discussion.
\subsection{Dehn--Seidel twists}
Let $L\cong S^n$ be a Lagrangian sphere in a compact Stein manifold
$(\Sigma,d\lambda)$ of real dimension~$2n$. By the Weinstein neighbourhood
theorem~\cite{wein71} there is a neighbourhood of $L$ symplectomorphic to
the cotangent bundle $T^*S^n$ with its canonical symplectic structure
$d\lambda_{\mathrm{can}}$, which is defined as follows.
Using Cartesian coordinates $(\bq ,\bp) \in \R^{n+1}\times\R^{n+1}$,
we can describe the cotangent bundle $T^*S^{n}\subset\R^{2n+2}$ by the
equations
\[ \bq\cdot\bq=1 \text{ and } \bq\cdot\bp=0;\]
the canonical $1$-form is given by $\lambda_{\mathrm{can}}=\bp\, d\bq$.

For each $k\in\Z$, one can define a so-called $k$-fold Dehn twist
\[ \tau_k\co (T^*S^n,d\lambda_{\mathrm{can}})\longrightarrow
(T^*S^n,d\lambda_{\mathrm{can}}) \]
as follows. First consider the normalised geodesic flow $\sigma_t$ on
$T^*S^n\setminus S^n$ given by
\[ \sigma_t(\bq,\bp)=
\left(
\begin{array}{cc}
\cos t       & |\bp|^{-1} \sin t \\
-|\bp|\sin t & \cos t 
\end{array}
\right)
\binom{\bq}{\bp}.\]
Then set
\[ \tau_k(\bq,\bp)=\sigma_{g_k(|\bp|)}(\bq,\bp),\]
where $r\mapsto g_k(r)$ is a smooth function that interpolates monotonically
between $k\pi$ near $r=0$ and $0$ for large~$r$. For $\bp=0$ we read this
as $\tau_k(\bq,0)=((-1)^k\bq ,0)$.
Then $\tau_k$ is an exact
symplectomorphism of $(T^*S^n,d\lambda_{\mathrm{can}})$, see~\cite{vkni05},
equal to the identity for $|\bp|$ large.
This allows us to regard $\tau_k$ as a symplectomorphism of~$\Sigma$.
Viewed this way, $\tau_k$ is called a $k$-fold {\bf Dehn twist} along
$L\subset\Sigma$.
The map $\tau_1$ is called a right-handed Dehn (or Dehn--Seidel) 
twist~\cite[Section~6]{seid99}, cf.~\cite{seid08}; for $n=1$ this
coincides with the classical notion of a Dehn twist.
\subsection{Stabilisations}
Suppose we are given a contact manifold $\open (\Sigma,\psi)$ and
a properly embedded Lagrangian disc $L\subset\Sigma$ with Legendrian
boundary $\partial L\subset\partial\Sigma$. We can construct a
Stein manifold $\Sigma'$ by attaching an $n$-handle to $\Sigma$
along~$\partial L$. This new Stein (and hence symplectic) manifold contains
a Lagrangian sphere~$L'$, given as the union of $L$ and the core of the
$n$-handle. Let $\tau_{L'}$ be a right-handed Dehn twist along~$L'$.
The contact manifold $\open (\Sigma',\psi\circ\tau_{L'})$ is called
a {\bf right-handed stabilisation} of $\open (\Sigma ,\psi)$
along~$L$.

Giroux has announced the following result.

\begin{prop}[Giroux]
A right-handed stabilisation of $\open (\Sigma ,\psi)$
does not change its contactomorphism type.\qed
\end{prop}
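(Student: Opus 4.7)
The plan is to exhibit the stabilisation operation as an open book connected sum with the standard contact sphere $(S^{2n+1},\xi_{\mathrm{st}})$, which will imply triviality because connected sum with a standard sphere leaves the contactomorphism type unchanged. The starting point is the Milnor open book of the Brieskorn singularity $z_0^2+\cdots+z_n^2=0$, whose Milnor fibre is the cotangent bundle $T^*S^n$ with its canonical symplectic form and whose classical monodromy is a single right-handed Dehn twist $\tau_1$ along the vanishing sphere. The link of this singularity is the standard contact sphere, so $\open(T^*S^n,\tau_1)\cong(S^{2n+1},\xi_{\mathrm{st}})$.

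Next I would set up local models. By Weinstein's neighbourhood theorem, a neighbourhood of the Lagrangian disc $L\subset\Sigma$ is symplectomorphic to a neighbourhood of the zero section in $T^*D^n$; attaching a Weinstein $n$-handle along the Legendrian $\partial L$ then produces a neighbourhood of the Lagrangian sphere $L'\subset\Sigma'$ symplectomorphic to a neighbourhood of the zero section in $T^*S^n$. Under this identification the Dehn twist $\tau_{L'}$ is, by construction, the model Dehn twist $\tau_1$. The claim to establish is that $\open(\Sigma',\psi\circ\tau_{L'})$ splits as a contact connected sum of $\open(\Sigma,\psi)$ with $\open(T^*S^n,\tau_1)$, the sum being performed in a neighbourhood of the binding-facing end of~$L$. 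Using the explicit normal forms $\lambda+d\varphi$ on the mapping torus and $h_1(r)\lambda|_{\partial\Sigma}+h_2(r)\,d\varphi$ near the binding recalled in Section~\ref{section:openbooks_defs}, the piece of $\Sigma'$ away from $L'$ contributes $\open(\Sigma,\psi)$, while the $T^*S^n$-neighbourhood of $L'$ together with its monodromy factor contributes $(S^{2n+1},\xi_{\mathrm{st}})$.

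The conclusion then follows because a contact connected sum with $(S^{2n+1},\xi_{\mathrm{st}})$ does nothing. The main obstacle I anticipate is carrying out the splitting of the previous step at the symplectic level rather than merely topologically, so that the contact form glues correctly across the connected-sum region. Here the stability statement cited in Section~\ref{section:openbooks_defs}, namely that $\open(\Sigma,\psi)$ depends only on the symplectomorphism type of the Liouville completion of~$\Sigma$, is a useful ingredient: it gives the flexibility to deform the Stein structure near $L'$ into the model $T^*S^n$ and to isotope $\psi\circ\tau_{L'}$ into its model form without altering the contactomorphism type of the associated open book.
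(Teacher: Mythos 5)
Your reduction to a connected sum with the standard sphere only works when the Lagrangian disc $L$ is trivial, i.e.\ when $(L,\partial L)$ sits inside a standard (Darboux-type) ball pair near the boundary of the page, so that $\Sigma'=\Sigma\natural DT^*S^n$ and $\psi\circ\tau_{L'}=\psi\natural\tau_1$ in the sense of the book connected sum of Section~\ref{section:sum}. The proposition, however, allows an \emph{arbitrary} properly embedded Lagrangian disc with Legendrian boundary: $\partial L$ may be knotted in $\partial\Sigma$ and $L$ may run over handles of~$\Sigma$ (this is exactly the situation flagged in the remark after Move~I, where $\tb(\partial L)$ need not be~$-1$). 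In that generality the claimed splitting fails already topologically: for instance, take $\Sigma=DT^*S^2$ and $L$ a cotangent fibre disc; attaching the handle along $\partial L$ produces the $A_2$-plumbing, whose intersection form $\left(\begin{smallmatrix}-2&1\\1&-2\end{smallmatrix}\right)$ is not that of $DT^*S^2\natural DT^*S^2$, so $\open(\Sigma',\psi\circ\tau_{L'})$ is not an open book connected sum of $\open(\Sigma,\psi)$ with $\open(T^*S^2,\tau_1)$, even though the conclusion of the proposition still holds. The complement of a neighbourhood of $L'$ in $\Sigma'$ is $\Sigma$ minus a neighbourhood of~$L$, which cannot be isolated in a ball when $L$ is essential, so there is no connected-sum region in which to glue the normal forms. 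Your identification $\open(T^*S^n,\tau_1)\cong(S^{2n+1},\xi_{\mathrm{st}})$ is correct, but it only disposes of the special case.

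The argument the paper has in mind is different and is what handles the general disc: attaching the $n$-handle to each page along $\partial L$ is a subcritical contact surgery on the isotropic sphere $\partial L\subset\open(\Sigma,\psi)$ (with the normal-bundle trivialisation coming from the binding), the subsequent right-handed Dehn twist $\tau_{L'}$ is a Legendrian surgery on $L'\subset\open(\Sigma',\psi)$ by Theorem~\ref{thm:Dehntwist_legendrian_surgery}, and these two surgeries form a cancelling pair of symplectic handles; see \cite{vkoe} for the details. To rescue your approach you would either have to prove that every such $L$ can be made trivial (which is false in the sense needed), or reprove the handle-cancellation statement, at which point the connected-sum packaging is no longer doing any work.
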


For a detailed proof see~\cite{vkoe}; here is the main idea.
The Legendrian sphere $\partial L\subset\partial\Sigma$ in the
binding of the open book is an isotropic sphere in the ambient contact
manifold $\open(\Sigma, \psi)$. So the attaching of a
handle to each page along $\partial L$ may be seen as a contact surgery
(in the sense of~\cite{elia90,wein91}, cf.~\cite{geig08}) along
this isotropic sphere, where the necessary trivialisation of the
symplectic normal bundle of $\partial L$ in $\open(\Sigma, \psi)$
is provided by the trivialisation of the normal bundle
of the binding $\partial\Sigma$ in the open book.
Performing a right-handed Dehn twist along the Lagrangian sphere $L'$
in the new page $\Sigma'$ may be regarded as a Legendrian
surgery on $L'\subset \open (\Sigma',\psi)$,
see~Theorem~\ref{thm:Dehntwist_legendrian_surgery} below.
This second surgery can be seen to cancel the first, even on the level
of symplectic handle attachments.
\subsection{Book connected sum}
\label{section:sum}
A connected sum operation for open books has been described
in~\cite{mnms06}. Let $\open (\Sigma_i,\psi_i)$, $i=1,2$, be two open
books of dimension $2n+1$. We write $\partial\Sigma_i$ for the
binding. One can form the connected sum of these two
open books by cutting out discs
$D^{2n+1}_i$ embedded in such a way that the pair $(D^{2n+1}_i,
D^{2n+1}_i\cap\partial\Sigma_i)$ is diffeomorphic to
the standard disc pair $(D^{2n+1},D^{2n-1})$.
Then the connected sum
\[ \open (\Sigma_1,\psi_1)\#\open (\Sigma_2,\psi_2)\]
is diffeomorphic to
\[ \open (\Sigma_1\natural\Sigma_2,\psi_1\natural\psi_2),\]
where $\Sigma_1\natural\Sigma_2$ denotes the boundary connected sum of the
pages, and $\psi_1\natural\psi_2$ is the obvious map on this
boundary connected sum that restricts to $\psi_i$ on~$\Sigma_i$.

This construction is compatible with the contact structures on these
open books if $\Sigma_1\natural\Sigma_2$ is interpreted as the
boundary connected sum of Stein or symplectic manifolds
as in~\cite{elia90,wein91}, see~\cite{vkoe05}.
\subsection{Homology of open books}
\label{section:homology_openbook}
The homology of an open book $M=A\cup_{\partial} B$ can easily be computed
in terms of the homology of the page $\Sigma$ and the action of
the monodromy $\psi$ on homology. This will turn out to be
especially useful in the discussion of simply connected $5$-manifolds,
whose diffeomorphism type is determined by homological data.

We only consider this $5$-dimensional case, and we assume
for simplicity that $\Sigma$ is composed of one $0$-handle
and only $2$-handles. With $Q$ denoting the intersection form
on $H_2(\Sigma)$, one finds that $H_1(\partial\Sigma)\cong\coker Q$,
cf.~\cite[pp.~427--8]{bred93}. This is the essential part of the
homology of $B\simeq\partial\Sigma$.

We briefly recall the argument for this statement about
$H_1(\partial\Sigma )$, since we need explicit information about the
homological generators in the proof of Proposition~\ref{prop:M_kN_k}
below. The homology $H_2(\Sigma )$ is isomorphic to~$\Z^m$,
with $m$ denoting the number of $2$-handles, freely generated
by the surfaces obtained by gluing a Seifert surface of
each attaching circle in $S^3=\partial D^4$ with the core disc
of the corresponding $2$-handle. The relative homology
group $H_2(\Sigma ,\partial\Sigma )$ is likewise isomorphic
to~$\Z^m$; here the generators are meridional discs of the
attaching circles whose boundary lies on the boundary of the
$2$-handle. The homology $H_1(\partial\Sigma )$ is
generated by the meridians of the attaching circles, i.e.\
the images of the generators of $H_2(\Sigma ,\partial\Sigma )$
under the boundary homomorphism. In terms of the described generators,
the homomorphism $H_2(\Sigma )\rightarrow H_2(\Sigma ,\partial\Sigma )$
is given by the intersection form~$Q$. The result
$H_1(\partial\Sigma)\cong\coker Q$ now follows from the homology
exact sequence of the pair $(\Sigma ,\partial\Sigma)$.

The homology of the $\Sigma$-bundle $A$ over $S^1$ can be computed using
the Wang sequence~\cite[Lemma~8.4]{miln68}. The relevant part of this
sequence looks as follows.
\[ \ldots\longrightarrow H_3(A) \longrightarrow H_2(\Sigma)
\stackrel{\psi_*-\id}{\longrightarrow} H_2(\Sigma)
\longrightarrow H_2(A) \longrightarrow H_1(\Sigma ) \longrightarrow  
\ldots \]

This information on $A$ and $B$ can be combined to obtain the homology of
$M$ via the Mayer--Vietoris sequence of the decomposition
$M=A\cup_{\partial} B$.
\section{Monodromy}
\label{section:monodromy}
The symplectomorphism group of a given Stein manifold $\Sigma$ is not known,
in general. Therefore we restrict our attention to symplectomorphisms
that can be written as compositions of Dehn twists along Lagrangian
spheres $L\subset\Sigma$.
\subsection{Action of Dehn twists on homology}
\label{section:action}
Let $\Sigma$ be a symplectic manifold of dimension~$2n$ and $L\subset\Sigma$
a Lagrangian sphere. Write $[L]\in H_n(\Sigma)$ for the homology class
represented by~$L$. As before, we denote the intersection form
on $H_n(\Sigma)$ by~$Q$. With this notation, the homomorphism
on homology induced by a right-handed Dehn twist $\tau_L$
along $L$ is given by
\[
\begin{array}{rcl}
(\tau_L)_*\co H_n(\Sigma) & \longrightarrow & H_n(\Sigma)\\
u                         & \longmapsto     & u+(-1)^{1+n(n+1)/2}
                                                Q([L],u)\cdot [L].
\end{array}\]
This can be seen as follows. First suppose $u$ is a homology
class represented by a closed, oriented submanifold of~$\Sigma$. This
submanifold can be isotoped to intersect $L$ transversely in
a finite number of points. Think of a neighbourhood of $L$ in
$\Sigma$ as $T^*L$; the submanifold representing $u$ may then be
assumed to intersect $T^*L$ in a finite number of fibres. On the level
of homology, a Dehn twist along $L$ adds $\pm [L]$ to $u$ for each
of these intersections. The dependence of the sign in $\pm [L]$
on the sign of the transverse intersection can be
computed explicitly in the local model; we leave this to the
reader.

The same argument applies for any singular cycle~$u$, as can be verified by
using the intersection theory developed in~\cite[{\S}73]{seth34} for pairs
of singular chains of complementary dimensions in a given manifold.
\subsection{Monodromy and Stein filling}
\label{section:monodromy_filling}
For the most part we shall be concerned with open books
that have trivial monodromy. The following proposition tells
us that this is a reasonably interesting class of manifolds
to consider. Recall that a contact manifold $(M,\xi )$ is said to be
{\bf Stein fillable} if it arises as the boundary of a compact Stein
manifold, with $\xi$ given as the complex tangencies on the boundary.
As is well known, a Stein manifold of real dimension $2n+2$ has a
handle decomposition with handles up to index $n+1$ only. The filling
is called {\bf subcritical} if there are no handles of index $n+1$.

\begin{prop}
\label{prop:subcritical_monodromy}
A contact structure $\xi$ on a closed manifold $M$ is supported
by an open book with trivial monodromy if and only if $(M,\xi )$
is subcritically Stein fillable.
\end{prop}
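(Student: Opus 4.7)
The plan is to prove the two implications separately; both hinge on identifying the boundary of a product of Stein manifolds with an open book having trivial monodromy.

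For the forward direction, suppose $(M,\xi)\cong\open(\Sigma,\id)$ for some compact Stein page~$\Sigma$. I would show that $\Sigma\times D^2$, with its product Stein structure, is a subcritical Stein filling of $(M,\xi)$. Since a compact Stein manifold of real dimension $2n$ admits a handle decomposition with handles of index at most~$n$, the product $\Sigma\times D^2$ of real dimension $2n+2$ inherits handles of index at most $n<n+1$, so it is subcritical. The boundary
\[ \partial(\Sigma\times D^2)=(\Sigma\times S^1)\cup_{\partial\Sigma\times S^1}(\partial\Sigma\times D^2)\]
is, after smoothing corners, the open book with page $\Sigma$ and trivial monodromy. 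A short computation comparing the product Liouville form $\lambda+\tfrac{r^2}{2}\,d\varphi$ with the model forms $\lambda+d\varphi$ on the mapping torus and $h_1(r)\,\lambda|_{\partial\Sigma}+h_2(r)\,d\varphi$ on $\partial\Sigma\times D^2$ from Section~\ref{section:openbooks_defs} shows that the contact structure of complex tangencies on $\partial(\Sigma\times D^2)$ coincides with the one supported by the open book.

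For the converse, suppose $(W,J)$ is a subcritical Stein filling of $(M,\xi)$. The decisive input is Cieliebak's splitting theorem: every subcritical Stein domain (of sufficiently high dimension) is Stein-deformation equivalent to a product $V\times D^2$ for some Stein domain~$V$ of one complex dimension less. By the remarks at the end of Section~\ref{section:openbooks_defs}, the contact structure on the boundary depends only on the symplectomorphism type of the completion of the filling, so $\xi$ is contactomorphic to the boundary contact structure on $V\times D^2$, which by the forward direction is $\open(V,\id)$. This converse implication is the \emph{main obstacle}, since it rests on Cieliebak's non-trivial splitting theorem; by contrast, the forward direction is routine once the two contact models on $\partial(\Sigma\times D^2)$ are identified and the product handle decomposition is recognised as subcritical.
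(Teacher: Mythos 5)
Your argument is correct and follows essentially the same route as the paper: the forward direction realises the filling as $\Sigma\times D^2$ with its product Stein structure (the paper makes the subcriticality explicit via the plurisubharmonic function $f(p)+x^2+y^2$, which is the same point as your handle-index observation), and the converse is exactly the appeal to Cieliebak's splitting theorem together with the fact that the boundary contact structure only depends on the completed filling. No gaps worth flagging.
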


\begin{proof}
Suppose $(M,\xi)=\open (\Sigma,\id)$. Then
\[ M=\Sigma\times S^1\cup_{\partial}\partial\Sigma\times D^2=
\partial (\Sigma\times D^2).\]
If $J$ is the complex structure on $\Sigma$ and $f\co\Sigma\rightarrow\R$
a strictly plurisubharmonic function, then $\Sigma\times D^2\ni (p,x,y)
\mapsto f(p)+x^2+y^2$ defines a strictly plurisubharmonic function
on $\Sigma\times D^2$ with respect to the obvious complex structure $(J,i)$,
and this function has no critical points of maximal index.

The converse is due to Cieliebak~\cite{ciel02}, who showed
that any subcritical Stein manifold is equivalent to a product
$\Sigma\times\C$.
\end{proof}

According to a result of Loi--Piergallini and Giroux~\cite{giro02},
cf.~\cite{geig}, the $3$-dimensional Stein fillable contact manifolds
are characterised as those contact manifolds that admit a supporting
open book whose monodromy is a composition of right-handed Dehn twists.
In higher dimensions, this condition on the monodromy is still
sufficient for Stein fillability.

\begin{prop}
If a contact manifold is supported by an open book
whose monodromy is a composition of right-handed Dehn twists
along Lagrangian spheres on the pages, then it
is Stein fillable.\qed
\end{prop}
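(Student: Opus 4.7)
The plan is to reduce to the case of trivial monodromy (handled by Proposition~\ref{prop:subcritical_monodromy}) and then build the monodromy up one Dehn twist at a time, interpreting each additional twist as a Legendrian surgery on the boundary of an already-constructed Stein filling.

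Write the monodromy as $\psi=\tau_{L_k}\circ\cdots\circ\tau_{L_1}$, a composition of right-handed Dehn twists along Lagrangian spheres $L_1,\dots,L_k$ on the page~$\Sigma$. I would argue by induction on~$k$. For the base case $k=0$, Proposition~\ref{prop:subcritical_monodromy} exhibits the compact Stein manifold $\Sigma\times D^2$ as a (subcritical) Stein filling of $\open(\Sigma,\id)$.

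For the inductive step, suppose $\open(\Sigma,\tau_{L_{j-1}}\circ\cdots\circ\tau_{L_1})$ admits a compact Stein filling~$W_{j-1}$. The Lagrangian sphere $L_j\subset\Sigma$ sits in a page, and since pages of an open book are in particular isotropic-after-embedding for their tangential Lagrangian submanifolds, $L_j$ is a Legendrian sphere in the ambient contact manifold $\open(\Sigma,\tau_{L_{j-1}}\circ\cdots\circ\tau_{L_1})$. The key input is the forthcoming Theorem~\ref{thm:Dehntwist_legendrian_surgery}, which identifies the effect on the open book of post-composing the monodromy with a right-handed Dehn twist $\tau_{L_j}$ with a Legendrian surgery along this Legendrian sphere. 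Such a Legendrian surgery is, by Eliashberg--Weinstein, realised at the level of fillings by attaching a symplectic handle of critical index $n+1$ to $W_{j-1}$ along~$L_j$, producing a new compact Stein manifold $W_j$ whose complex tangencies on the boundary recover $\open(\Sigma,\tau_{L_j}\circ\cdots\circ\tau_{L_1})$. Iterating $k$ times yields the desired Stein filling.

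The main obstacle is the black-box statement of Theorem~\ref{thm:Dehntwist_legendrian_surgery}: once one knows that appending a right-handed Dehn twist to the monodromy corresponds on the contact side to Legendrian surgery along the corresponding sphere, the rest of the argument is a routine induction combined with the standard symplectic handle-attachment machinery. A minor bookkeeping point is that $L_j$ must continue to be realised as a Lagrangian sphere on a page of the evolving open book; this is automatic because each previous Dehn twist is supported in a Weinstein neighbourhood of~$L_i$, and the Lagrangian spheres can be arranged (by small Hamiltonian isotopy, if necessary) so that these supports are disjoint from~$L_j$.
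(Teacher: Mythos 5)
Your proposal is correct and is essentially the paper's own (folklore) argument, which it defers to van Koert's lecture notes: starting from the subcritical filling $\Sigma\times D^2$ of $\open(\Sigma,\id)$ from Proposition~\ref{prop:subcritical_monodromy}, one makes each Lagrangian sphere Legendrian in the contact open book, invokes Theorem~\ref{thm:Dehntwist_legendrian_surgery} to identify appending a right-handed Dehn twist with Legendrian surgery along that sphere, and realises each such surgery by attaching a critical-index Weinstein handle to the filling already constructed. Two small caveats: a Lagrangian sphere on a page is not Legendrian on the nose (one uses that $\lambda|_{L}$ is closed, hence exact on a sphere, to isotope $L$ into Legendrian position --- this is what ``can be made Legendrian'' means), and your closing ``bookkeeping'' fix via disjoining the supports of the twists is both unnecessary and false in general (the spheres $L_i$ may intersect, and such twists do not commute); it is harmless only because the page of the modified open book is still $\Sigma$, so $L_j$ sits on a page as a Lagrangian sphere no matter what the monodromy is, which is all that Theorem~\ref{thm:Dehntwist_legendrian_surgery} requires.
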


This appears to be a folklore theorem; a proof can be found
in~\cite{vkoe}, see also~\cite{akar10}.
The main idea is that a Lagrangian sphere on a page
can be made Legendrian with respect to the contact structure on the open
book; a right-handed Dehn twist then corresponds to a Legendrian
surgery, which preserves Stein fillability.

In \cite{bovk10} left-handed Dehn twists are used to construct
algebraically overtwisted contact manifolds, i.e.\ contact manifolds
with vanishing contact homology, which are conjecturally not strongly
fillable (and hence not Stein fillable).
\section{Diagrams for $5$-manifolds}
We now want to give diagrammatic representations of $5$-dimensional
contact open books. Here the page is a Stein surface, which can
be described by a Kirby diagram. According to a result
of Eliashberg~\cite{elia90}, worked out further by Gompf~\cite{gomp98},
any compact Stein surface can be obtained from the $4$-disc
in $\C^2$ by attaching a finite number of $1$- and $2$-handles,
where each $2$-handle is attached along a Legendrian knot
(in the standard contact structure on the relevant boundary $3$-manifold),
with framing $-1$ relative to the contact framing of that
Legendrian knot. Conversely (this is the easier part), this recipe always
produces a Stein surface.

The boundary $3$-manifold obtained by attaching $k$ $1$-handles
to the $4$-disc is the connected sum $\#_k(S^1\times S^2)$
with its unique tight contact structure; the attaching circles for the
$2$-handles form a Legendrian link in this contact manifold.
In other words, the Kirby diagram of a Stein surface consists
of a finite collection of pairs of attaching balls
for the $1$-handles in $\R^3\subset S^3$ with its standard contact structure
$\xi_{\mathrm{st}}=\ker (dz+x\, dy)$,
and the front projection to the $yz$-plane of a Legendrian link.
For more information see \cite{gomp98} and~\cite{gost99}.

In order to obtain a representation of the contact manifold
$\open (\Sigma ,\psi)$, one also needs to encode the monodromy
$\psi$ in the diagram. This is possible in special cases. For instance,
if one of the Legendrian knots in the Kirby diagram bounds in
an obvious way a Lagrangian disc in the Stein surface~$\Sigma$,
the union of this disc with the core disc of the $2$-handle
is a Lagrangian sphere, and we can speak of a Dehn twist along this sphere.
Beware that different such Dehn twists will not, in general,
commute with each other. In cases
where the order of the Dehn twists is inessential, we simply
write the relevant Dehn twist next to the Legendrian knot
representing the corresponding Lagrangian sphere. In particular,
stabilisations can be so encoded, provided one understands the
monodromy of the given diagram.

By Section~\ref{section:sum}, the diagram for the connected
sum of two contact open books is simply given by drawing the
attaching balls and Legendrian attaching circles in a single diagram,
separated by a hyperplane.
\subsection{Handle moves}
\label{section:moves}
We illustrate the use of (de-)stabilisations in the following two
handle moves on a diagram of $\open (\Sigma,\psi)$.
These moves do not change the contactomorphism type of
$\open (\Sigma,\psi)$.
\subsubsection{Move I}
Assume we are given a Kirby diagram for $\Sigma$ that includes
a Legendrian knot $K$ with the property that the monodromy $\psi$
equals the identity on the handle attached along~$K$.
Add a standard Legendrian unknot $K_0$
with Thurston--Bennequin invariant $\tb (K_0)=-1$ to the diagram,
unlinked with the given Legendrian link and not passing over
$1$-handles. This $K_0$ bounds an obvious Lagrangian disc,
so a right-handed Dehn twist $\tau$ along the corresponding
$2$-sphere (see the following remark)
amounts to a stabilisation of $\open (\Sigma,\psi)$.

\begin{rem}
It is not accidental that the knot $K_0$ we use for the
stabilisation is chosen to have $\tb (K_0)=-1$. Write $L'$ for
the Lagrangian $2$-sphere obtained by gluing the Lagrangian disc $L$
bounded by $K_0$ to the core disc of the handle attached along~$K_0$.
A neighbourhood of $L'$ looks like $T^*S^2$, so $L'$ has
self-intersection~$-2$. If we push the core disc along its boundary
in the direction of the surgery framing, we obtain a disjoint disc.
Hence, pushing $L$ in that direction along its boundary leads to
a disc having intersection $-2$ with $L$. If $L$ is topologically
isotopic to a disc in $\partial\Sigma$ (as in the case of~$K_0$),
this means that the linking number of $K_0$ with its push-off
in the direction of the surgery framing equals~$-2$. Since the
surgery framing is obtained from the contact framing by adding
a negative twist, we conclude $\tb (K_0)=-1$.

Beware that $\tb (K_0)$ may take other values, with $K_0$ regarded
as a  knot in $(S^3,\xi_{\mathrm{st}})$, if the Lagrangian
disc bounded by $K_0$ goes over $2$-handles.
\end{rem}

Move I is now performed in three steps, see Figure~\ref{fig:move1}:
\begin{itemize}
\item[(i)] Stabilise $\open (\Sigma,\psi)$ by adding
$(K_0,\tau)$ to the diagram as described.
\item[(ii)] Perform a handle slide of $K$ over $K_0$; this is possible via
a Legendrian isotopy, see~\cite[Proposition~1]{dige09}.
\item[(iii)] Destabilise by removing $(K_0,\tau)$ from the picture.
\end{itemize}

\begin{figure}[htp]
\labellist
\small\hair 2pt
\pinlabel {\scriptsize id} [tr] at 27 168
\pinlabel {\scriptsize id} [tr] at 27 66
\pinlabel {\scriptsize id} [tr] at 196 168
\pinlabel {\scriptsize id} [tr] at 190 74
\pinlabel {\scriptsize stabilise} [b] at 101 155
\pinlabel {\scriptsize handle slide} [br] at 145 106
\pinlabel {\scriptsize destabilise} [b] at 145 15
\pinlabel $\tau$ [t] at 280 166
\pinlabel $\tau$ [t] at 110 70
\endlabellist
\centering
\includegraphics[scale=0.9]{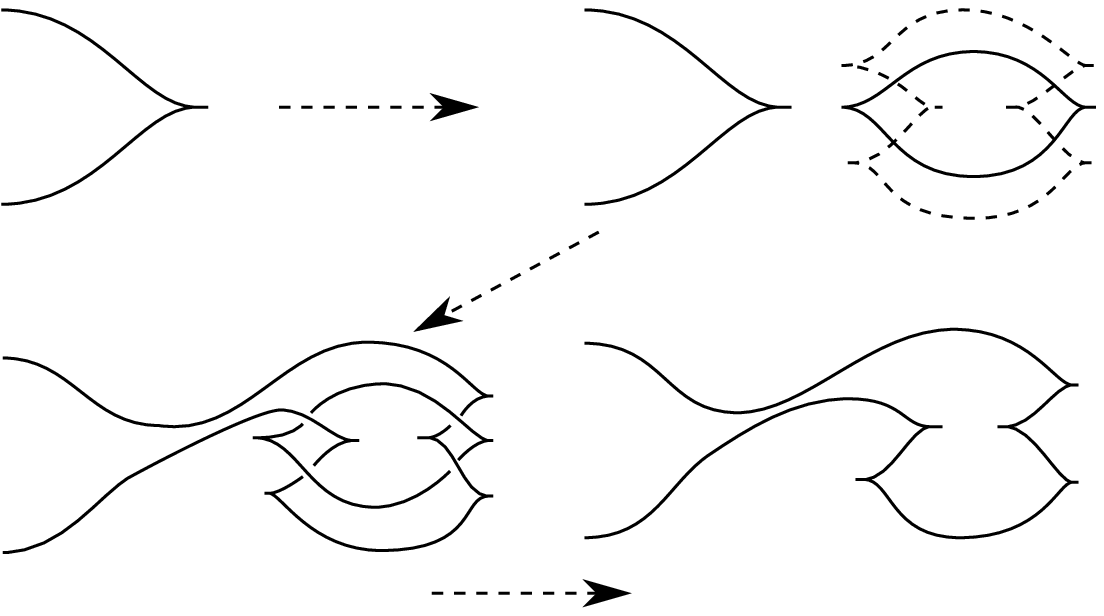}
  \caption{Move~I.}
  \label{fig:move1}
\end{figure}

\begin{rem}
\label{rem:extension}
A Legendrian isotopy of a Legendrian submanifold in a contact manifold
$(M,\xi =\ker\alpha)$ extends to a contact isotopy $\phi_t$ of $(M,\xi)$,
cf.~\cite[Theorem~2.6.2]{geig08}, and thence to an
$\R$-invariant symplectic isotopy $\Phi_t$
of the symplectisation $(\R\times M, d(e^s\alpha))$. Write
$\phi_t^*\alpha=e^{h_t}\alpha$ with a smooth function $h_t\co M\rightarrow\R$
and set $\Phi_t(s,x)=(s-h_t(x),\phi_t(x))$.
Then $\Phi_t^*(e^s\alpha)=e^s\alpha$, which implies that $\Phi_t$ is a
Hamiltonian isotopy. By cutting off the corresponding Hamiltonian
function, this isotopy may be assumed to coincide with
$\Phi_t$ on $\{ r\}\times M$ and to be stationary outside
$(0,R)\times M$ for $R>r>0$ sufficiently large.

Hence, if $(M,\xi)$ has a strong symplectic filling $(W,\omega)$, 
this cut off symplectic isotopy extends to an isotopy
of the symplectic completion of $(W,\omega)$ in the
sense of~\cite{elgr91}. So the time-$1$ map $\Phi_1$ may be regarded as
a symplectomorphism of the symplectic completion of $(W,\omega )$.
Alternatively, we may view $\Phi_1$ as a symplectomorphism of the filling
$(W,\omega)\cup \bigl([0,r]\times M, d(e^s\alpha)\bigr)$ of $(M,\xi)$
onto its image, which on the boundary induces the contactomorphism~$\phi_1$.

It will be in this sense that we interpret step (ii) of move~I
as a symplectomorphism of the filling.
\end{rem}

The effect of this move~I is to replace $K$ by its double
Legendrian stabilisation $S_+S_-K$, i.e.\
a Legendrian knot which has two additional zigzags (one up, one down).
Notice that the contact framing of $S_+S_-K$ differs from that
of $K$ by two negative (i.e.\ left-handed) twists; if $K$ is
homologically trivial this means $\tb (S_+S_-K)=\tb (K)-2$.
The rotation number $\rot$ does not change under this move.

One of the potential uses of move~I is the following.
As shown by Fuchs and Tabachnikov~\cite[Theorem~4.4]{futa97},
cf.~\cite{dige},
any topological isotopy of Legendrian knots can be turned into a Legendrian
isotopy of suitable Legendrian stabilisations.
Thus, after a repeated application
of move~I, two topologically isotopic Legendrian
knots with the same rotation number will become Legendrian isotopic.
\subsubsection{Move II}
Our second move gives us even greater flexibility, for it allows us
to change crossings of Legendrian knots $K,K'$ (or a self-crossing of~$K$)
in the Kirby diagram for~$\Sigma$, at the cost of replacing $K$
by its fourfold Legendrian stabilisation $S_+^2S_-^2K$. The move is
a combination of move~I with a further (de-)stabilisation and
handle slide; a pictorial description is given in Figure~\ref{fig:move2}.

\begin{figure}[htp]
\labellist
\small\hair 2pt
\pinlabel {\scriptsize id} [tr] at 30 208
\pinlabel {\scriptsize id} [tr] at 24 72
\pinlabel {\scriptsize id} [tr] at 244 208
\pinlabel {\scriptsize id} [tr] at 244 72
\pinlabel {\scriptsize move I and stabilise} [b] at 169 191
\pinlabel {\scriptsize destabilise} [b] at 204 40
\pinlabel {\scriptsize handle slide} [br] at 218 125
\pinlabel $\tau$ [t] at 330 208
\pinlabel $\tau$ [t] at 112 69
\endlabellist
\centering
\includegraphics[scale=0.9]{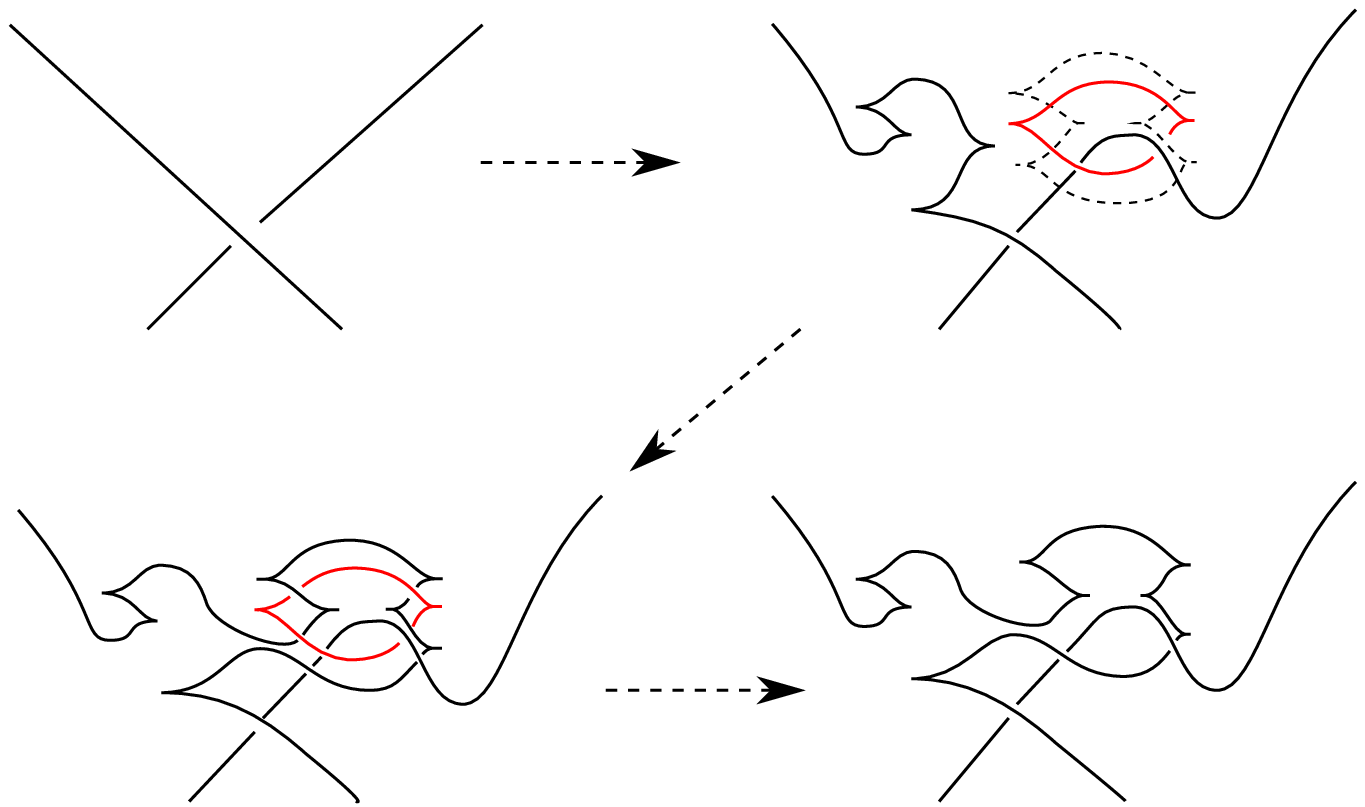}
  \caption{Move II.}
  \label{fig:move2}
\end{figure}

The first two steps in Figure~\ref{fig:move2} can be performed even
if the monodromy along $K$ is not the identity, but the
destabilisation may not be possible. Move~II without the final
destabilisation still allows us to change a given Kirby diagram
into one where the Legendrian link is topologically a link of unknots.
\subsection{Diagrams for $S^2\times S^3$ and $S^2\,\tilde\times\,S^3$}
\label{section:S2S3}
We now use moves I and~II from the preceding section to give a characterisation
of the contact manifolds $\open (\Sigma ,\id)$ where $\Sigma$
has a Kirby diagram consisting of a single knot~$K$.
 
Since $\pi_1(\SO(4))=\Z_2$, there are exactly two $S^3$-bundles over
$S^2$ up to bundle isomorphism, the trivial and the non-trivial one.
Their total spaces are non-diffeomorphic and are denoted by
$S^2\times S^3$ and $S^2\,\tilde\times\,S^3$, respectively.

If $M$ is a closed simply connected $5$-manifold with $H_2(M;\Z)\cong \Z$,
then by Barden's classification~\cite{bard65} one has
\[ M\cong
\left\{
\begin{array}{ll}
S^2\times S^3          & \text{ if }w_2(M)=0, \\
S^2\,\tilde\times\,S^3 & \text{ if }w_2(M)\neq 0. \\
\end{array}
\right. \]

\begin{prop}
\label{prop:S2S3}
Let $K$ be an oriented Legendrian knot in $(S^3,\xi_{\mathrm{st}})$.
Then the contact manifold $(M,\xi)=\open (\Sigma,\id)$, with $\Sigma$
the Stein surface given by the Kirby diagram consisting of $K$ only,
is one of the following:
\begin{itemize}
\item{} $(S^2\times S^3,\xi_{\lvert\rot(K)\rvert})$ if $\,\rot(K)$ is even,
\item{} $(S^2\,\tilde\times\,S^3,\xi_{\lvert\rot(K)\rvert})$
        if $\,\rot(K)$ is odd.
\end{itemize}  
As the notation suggests, the contact structure $\xi_{\lvert\rot(K)\rvert}$
depends, up to diffeomorphism, only on $\lvert\rot(K)\rvert$.
\end{prop}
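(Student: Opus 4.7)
The plan is to combine a topological computation of the underlying $5$-manifold with the handle moves from Section~\ref{section:moves} to reduce $K$ to a standard Legendrian unknot with the same rotation number.

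First, since the monodromy is trivial, Proposition~\ref{prop:subcritical_monodromy} identifies $(M,\xi)$ as a subcritical Stein filling $\partial(\Sigma\times D^2)$. Writing $D^6=D^4\times D^2$, the product $\Sigma\times D^2$ is obtained from $D^6$ by attaching a single $2$-handle along the circle $K\subset S^3\subset S^5=\partial D^6$. Every circle in $S^5$ is unknotted, so the topology of $M$ is determined by the framing of this $2$-handle as an element of $\pi_1(\SO(4))\cong\Z_2$: the trivial framing yields $\Sigma\times D^2\cong S^2\times D^4$ and hence $M\cong S^2\times S^3$, while the non-trivial framing yields $S^2\,\tilde\times\,S^3$ on the boundary. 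This framing is the Stein framing $\tb(K)-1$ extended trivially in the $D^2$-direction, so its parity is $\tb(K)-1\bmod 2$.

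Second, I would apply moves~I and~II to simplify $K$. Both preserve triviality of the monodromy and leave $\rot$ invariant (move~I preserves $\rot$ explicitly; move~II contributes $+2-2=0$). Move~II switches any crossing or self-crossing at the cost of a fourfold Legendrian stabilisation, so iterating we may assume $K$ has been replaced by a Legendrian unknot $K'$ with $\rot(K')=\rot(K)$. For Legendrian unknots in $(S^3,\xi_{\mathrm{st}})$ one has $\tb+\rot\equiv 1\pmod 2$ (check for the maximal-$\tb$ unknot and observe that stabilisation preserves the parity of $\tb+\rot$), so $\tb(K')-1\equiv\rot(K)\pmod 2$, giving the claimed dichotomy between $S^2\times S^3$ and $S^2\,\tilde\times\,S^3$.

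For the independence of the contact structure on the remaining choices, note that move~I decreases $\tb$ by~$2$ without changing $\rot$. Two Legendrian unknots with equal $\tb$ and equal $\rot$ are Legendrian isotopic by the Eliashberg--Fraser classification, so applying move~I sufficiently often to any two reduced diagrams gives a common $\tb$, hence Legendrian isotopic attaching circles, and therefore contactomorphic open books. Finally, reversing the orientation of $K$ negates $\rot(K)$ but leaves the unoriented framed knot, and hence the Stein surface and the open book, unchanged; so the contactomorphism type depends only on $\lvert\rot(K)\rvert$.

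The main obstacle will be the bookkeeping in the second step: one must verify that every application of move~II leaves the monodromy trivial---so that the destabilisation step of move~II remains available at each stage---and that a sequence of crossing changes can in fact be chosen to topologically unknot $K$ while respecting this hypothesis. Once this is handled, the parity computation and the appeal to the Eliashberg--Fraser classification are straightforward.
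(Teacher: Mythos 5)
Your proposal is correct, and the second half (moves~I and~II to untie $K$ while preserving $\rot$, the parity $\tb+\rot\equiv 1$ mod~$2$, repeated move~I to equalise $\tb$, and the Eliashberg--Fraser classification, with orientation reversal accounting for the sign of $\rot$) is essentially the paper's argument. Where you genuinely diverge is in pinning down the diffeomorphism type: the paper first unties $K$, identifies $\Sigma\times D^2$ as a $D^4$-bundle over $S^2$, and then decides which bundle by computing $c_1(\xi)=\rot(K)\,h$ via Gompf's formula $c_1(\Sigma)=\rot(K)\,h$ and using $w_2(M)\equiv c_1(\xi)$ mod~$2$; you instead argue purely handle-theoretically in dimension six, using that circles in $S^5$ are unknotted and that framings form a torsor over $\pi_1(\SO(4))\cong\Z_2$, and then translate $\tb-1$ into $\rot$ mod~$2$ by the parity relation. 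This is a perfectly viable and more elementary route for the proposition as stated, but two remarks are in order. First, the one step you assert without justification is the base point of the $\Z_2$-torsor: you need that the Seifert ($0$-)framing of the (now unknotted) attaching circle, extended by the trivialisation of the normal bundle of $S^3$ in $S^5$, is the framing that extends over the Seifert disc $D\subset S^3\subset S^5$, so that even values of $\tb-1$ give $S^2\times D^4$ and odd values the twisted bundle; this is true (the trivialisation of $\nu(D\subset S^5)=\nu(D\subset S^3)\oplus\nu(S^3\subset S^5)|_D$ does the job, or one can compute $w_2$ of the resulting $6$-manifold on the capped Seifert surface), but if the base point were off by one the dichotomy would flip, so it must be said. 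Second, your route never produces $c_1(\xi)=\rot(K)\,h$, which the paper's proof yields as a by-product and which is invoked later (in the proof of Theorem~\ref{thm:c_1_determines_5manifold}); for the proposition itself this is not needed. Your flagged ``main obstacle'' is in fact harmless: the monodromy is the identity throughout, so the hypothesis of moves~I and~II (identity monodromy on the handle along $K$) holds at every stage and the destabilisation in move~II is always available.
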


\begin{proof}
Using move~II we can untie any Legendrian knot without changing its
rotation number, so we may assume that $K$ is an unknot.
Then $\Sigma$ is a $2$-disc bundle over~$S^2$. The zero section $S^2$ of this
bundle, oriented in such a way that the disc $D^2\subset S^2$
bounded by $K$ in $S^3$ is oriented consistently with~$K$,
represents the positive generator of $H_2(\Sigma)$.
Since the monodromy is the identity, we have
$M\cong \partial (\Sigma \times D^2)$.
This means that $M$ is the boundary of a $4$-disc bundle over~$S^2$,
i.e.\ an $S^3$-bundle.

Next we determine the first Chern class of~$\xi$.
The cohomology exact sequence for the pair
$(\Sigma \times D^2,\partial (\Sigma \times D^2))$ shows that the
inclusion $i\co\partial (\Sigma\times D^2)\rightarrow\Sigma\times D^2$
induces an isomorphism $i^*\co H^2(\Sigma \times D^2)\rightarrow
H^2(\partial(\Sigma \times D^2))$.
The contact structure $\xi$ is given by the complex tangencies
of $\partial(\Sigma \times D^2)$ (after the smoothing of corners),
and the complementary complex line bundle in $T(\Sigma\times D^2)|_{\partial
(\Sigma\times D^2)}$ is trivial, so we have $c_1(\xi)=i^*c_1
(\Sigma\times D^2)$. The class $c_1(\Sigma\times D^2)$ can be
naturally identified with $c_1(\Sigma)$, which by
\cite[Proposition~2.3]{gomp98} equals $\rot (K)\, h$,
where $h$ is the positive generator of $H^2(\Sigma)$. Thus, under the
mentioned identifications we have $c_1(\xi)=\rot (K)\, h$.

The second Stiefel--Whitney class $w_2(M)$ equals the mod~2
reduction of $c_1(\xi)$. So the diffeomorphism type of $M$
is as claimed.

It remains to show that the contact structure is determined by the
absolute value of its first Chern class. Thus, let $K_1$ and $K_2$ be two
oriented Legendrian unknots with $\rot (K_1)=\pm\rot (K_2)$.
The orientation of these knots has no bearing on the
resulting contact manifold, only on the designation of one
of the generators of $H_2$ as the positive one. So we may orient
$K_1$ and $K_2$ such that $\rot (K_1)=\rot (K_2)$.
The parity condition $\tb (K_i)+\rot (K_i)\equiv 1$ mod~$2$,
cf.~\cite{geig08}, allows us to achieve in addition that
$\tb (K_1)=\tb (K_2)$ after a repeated application of move~I to one of the
two knots. From the classification
of Legendrian unknots by Eliashberg and Fraser~\cite{elfr09}
it follows that $K_1$ and $K_2$ are then Legendrian isotopic,
so the corresponding $5$-dimensional contact manifolds are contactomorphic.
\end{proof}

\begin{rem}
Both $S^2\times S^3$ and $S^2\,\tilde\times\,S^3$ admit
orientation-preserving diffeomorphisms that act as minus the
identity on~$H^2$ \cite[Theorem~2.2]{bard65}, and
orientation-reversing diffeomorphisms that act as the
identity on $H^2$ \cite[p.~399]{geig08}.
This explains why the orientation of $M$ and the sign
of the rotation number are irrelevant for the diffeomorphism
classification.
\end{rem}

From Barden's classification it follows that
$S^2\,\tilde\times\,S^3 \# S^2\,\tilde\times\,S^3$ is diffeomorphic to
$ S^2 \times S^3 \# S^2\,\tilde\times\,S^3$.
This can also be seen by diagram moves, which proves a little more.

\begin{prop}
\label{prop:connect_sum_S2S3}
With the notation from the preceding proposition, we have a
contactomorphism
\[ (S^2\,\tilde\times\,S^3,\xi_{2m+1})\# (S^2\,\tilde\times\,S^3,\xi_1 )
\cong (S^2 \times S^3,\xi_{2n}) \# (S^2\,\tilde\times\,S^3,\xi_1)\]
for any $m,n\in\N_0$.
\end{prop}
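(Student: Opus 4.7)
Following Proposition~\ref{prop:S2S3} together with the connected sum description of Section~\ref{section:sum}, I represent both sides as contact open books $\open(\Sigma,\id)$ whose Kirby diagram consists of two Legendrian unknots $K_1$, $K_2$ (each with $\tb=-1$), separated by a hyperplane: the LHS corresponds to rotation numbers $(\rot(K_1),\rot(K_2))=(2m+1,1)$, and the RHS to $(2n,1)$. Legendrian handle slides on the Stein page preserve the Stein surface up to deformation (and hence the corresponding open book up to contactomorphism), and Move~II has the same property; the strategy is to transform the LHS diagram into the RHS one by a sequence of these moves.

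The key step is a single Legendrian handle slide of $K_1$ over $K_2$, replacing $K_1$ by a Legendrian band sum $K_1\#_L K_2'$ with a Legendrian push-off $K_2'$ of~$K_2$. The standard identities
\[
\tb(K_1\#_L K_2')=\tb(K_1)+\tb(K_2')+1=-1,\qquad
\rot(K_1\#_L K_2')=\rot(K_1)\pm\rot(K_2)=(2m+1)\pm 1
\]
show that the new first component is again a Legendrian unknot with $\tb=-1$ and rotation number $2m$ or $2m+2$. Since linking number is additive on $H_1$ and $K_2'$ has linking $\tb(K_2)=-1$ with $K_2$, the new component is Hopf-linked to $K_2$. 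I then apply Move~II once to flip the essential inter-component crossing, reducing the linking number to zero at the cost of stabilising the first component to $S_+^2 S_-^2 K$, which shifts $\tb$ by $-4$ but preserves $\rot$. After this, the two Legendrian unknots can be isotoped apart into a hyperplane-separated diagram with rotation numbers $(2m,1)$ or $(2m+2,1)$.

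By Proposition~\ref{prop:S2S3}, which asserts that the open book depends only on $|\rot|$, together with the book connected sum formula, this new diagram represents $(S^2\times S^3,\xi_{2m})\#(S^2\,\tilde\times\,S^3,\xi_1)$ or $(S^2\times S^3,\xi_{2m+2})\#(S^2\,\tilde\times\,S^3,\xi_1)$. Iterating the slide-plus-Move~II procedure shifts $\rot(K_1)$ by $\pm 1$ at each step, so after $|2n-(2m+1)|$ iterations with appropriate sign choices the diagram has rotation numbers $(2n,1)$ and realises the RHS. The main technical obstacle is the topological bookkeeping: checking that the post-slide Legendrian link is indeed a Hopf-type configuration for which one Move~II crossing change followed by a Reidemeister~II simplification genuinely unlinks the two components; once this is in place, the identification of the final diagram follows from Proposition~\ref{prop:S2S3}.
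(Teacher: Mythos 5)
Your overall strategy coincides with the paper's proof: represent each summand by a Legendrian unknot, slide the knot of rotation number $2m+1$ over the shark representing $\xi_1$ (rotation numbers add, with a sign depending on orientation), use move~II together with (de)stabilisations to return to a split diagram of unknots, identify the summands via Proposition~\ref{prop:S2S3}, and iterate to reach arbitrary~$n$ (the iteration is a point the paper leaves implicit, so making it explicit is welcome). However, your Legendrian bookkeeping contains concrete errors. A Legendrian unknot with odd rotation number cannot have $\tb=-1$: the parity condition $\tb+\rot\equiv 1$ mod~$2$ and the Bennequin inequality $\tb+\lvert\rot\rvert\le -1$ force the $\xi_1$-summand to be the shark with $\tb=-2$, $\rot=\pm 1$, and the knot with $\rot=2m+1$ to have $\tb\le -2m-2$. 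Consequently the computation $\tb(K_1\#_L K_2')=-1$ and the claim that the slid knot is merely Hopf-linked to $K_2$ (so that a single application of move~II unlinks the components) are not correct: the push-off used in a handle slide is taken with respect to the surgery framing $\tb(K_2)-1$, so the linking number of the new component with $K_2$ is $\tb(K_2)-1$, and in general several crossing changes, hence several stabilisations, are required. None of this derails the argument, since move~II may be applied repeatedly, stabilisations preserve the rotation number, and Proposition~\ref{prop:S2S3} identifies each summand from $\lvert\rot\rvert$ alone --- which is exactly how the paper argues, tracking only rotation numbers and discarding the superfluous zigzags with the reverse of move~I rather than keeping control of~$\tb$.
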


\begin{proof}
The proof consists of sliding the $2$-handle corresponding
to $\xi_{2m+1}$ over the one corresponding to~$\xi_1$;
Figure~\ref{fig:connected_sum_S2S3} shows this for $m=0$, $n=1$.
From (i) to (ii) one performs a handle slide; from (ii) one
gets to (iii) by using move~II to change crossings and 
the reverse of move~I to perform double Legendrian destabilisations.
Observe that the rotation numbers add under a handle slide.
If we perform a handle subtraction instead of a handle
addition (i.e.\ a handle slide with the orientation of
the shark on the right-hand side reversed), we obtain $n=0$.
\end{proof}

\begin{figure}[htp]
\labellist
\small\hair 2pt
\pinlabel {\small (i)} [b] at 14 99
\pinlabel {\small (ii)} [b] at 310 99
\pinlabel {\small (iii)} [b] at 618 99
\endlabellist
\centering
\includegraphics[scale=0.38]{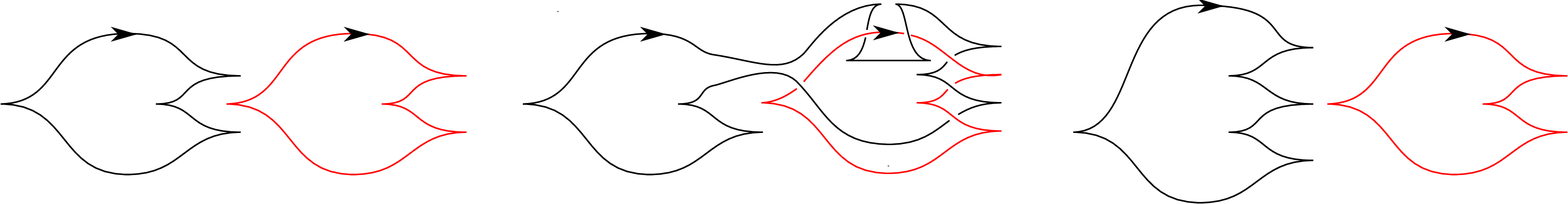}
  \caption{The connected sum $S^2\,\tilde\times\,S^3\# S^2\,
           \tilde\times\,S^3$.}
  \label{fig:connected_sum_S2S3}
\end{figure}
\subsection{Subcritical fillings without $1$-handles}
\label{section:no1handles}
We are now in a position to classify $5$-dimensional contact
manifolds that admit subcritical Stein fillings without
$1$-handles. The first Chern class $c_1(\xi)$ of a contact structure
on a simply connected $5$-manifold determines the homotopy class
of the underlying reduction of the structure group
to $\UU(2)\times 1$. When there exists a
subcritical Stein filling, it actually determines the
contact structure.

\begin{thm}
\label{thm:c_1_determines_5manifold}
Let $(M_i,\xi_i)$, $i=1,2$, be two simply connected contact $5$-manifolds
that admit subcritical Stein fillings without $1$-handles.
If there is an isomorphism $\phi\co H^2(M_1)\rightarrow H^2(M_2)$
such that $\phi(c_1(\xi_1))=c_1(\xi_2)$, then $(M_1,\xi_1)$ and
$(M_2,\xi_2)$ are contactomorphic. 
\end{thm}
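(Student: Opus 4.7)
The plan is to bring both contact manifolds into a common diagrammatic normal form and match them via handle slides dictated by~$\phi$. By Proposition~\ref{prop:subcritical_monodromy} we may write $(M_i,\xi_i)=\open(\Sigma_i,\id)$, where $\Sigma_i$ is a compact Stein surface with a Kirby diagram consisting of a Legendrian link $L_i=K_1^{(i)}\cup\cdots\cup K_{m_i}^{(i)}$ in $(S^3,\xi_{\mathrm{st}})$ and no $1$-handles; the corresponding filling is $W_i=\Sigma_i\times D^2$. A computation with the cohomology exact sequence of $(W_i,M_i)$ and Lefschetz duality shows that $i^*\co H^2(W_i)\to H^2(M_i)$ is an isomorphism and that $M_i$ is simply connected with $H^2(M_i)\cong\Z^{m_i}$. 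Extending the Chern-class computation from the proof of Proposition~\ref{prop:S2S3} to the multi-component setting, $c_1(\xi_i)$ is represented in the natural basis by the rotation-number vector $(\rot K_1^{(i)},\ldots,\rot K_{m_i}^{(i)})$, so the hypothesis on $\phi$ forces $m_1=m_2=:m$ and places the two rotation-number vectors in a common $GL(m,\Z)$-orbit.

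I would then reduce each $L_i$ to a split link of Legendrian unknots. By repeated application of move~II any Legendrian (self-)crossing can be changed at the cost of a fourfold stabilisation, an operation that preserves rotation numbers; hence the components of $L_i$ can be untied and separated. By Proposition~\ref{prop:S2S3} combined with the book-connected-sum description of Section~\ref{section:sum}, the contact manifold then decomposes as a connected sum of $m$ factors of the form $(S^2\times S^3,\xi_{|r|})$ or $(S^2\,\tilde\times\,S^3,\xi_{|r|})$, one for each rotation number $r$ in the diagram.

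The remaining task is to convert the normal form for $(M_1,\xi_1)$ into that for $(M_2,\xi_2)$ by realising $\phi$ as a sequence of diagrammatic operations. Elementary transvections $e_j\mapsto e_j+e_k$ are produced by handle slides of $K_j$ over $K_k$---under which rotation numbers add---while the sign changes needed to generate the rest of $GL(m,\Z)$ come from reversing the orientation of individual Legendrian components. After each handle slide we re-simplify via moves~II and~I, exactly as in the proof of Proposition~\ref{prop:connect_sum_S2S3}, to return to a split-unknot diagram with the transformed rotation numbers. Composing all of these moves yields the desired contactomorphism.

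The main subtlety will lie in verifying, at each stage, the compatibility of the handle-slide and clean-up operations with the contact structure: Legendrian handle slides on the page must lift to symplectic handle attachments preserving $\open(\Sigma,\id)$, and the (in general complicated) diagram produced by a slide must always be simplifiable back to a split link of unknots with the prescribed rotation numbers. Since moves~I and~II both preserve the contact manifold and the individual rotation numbers of all components, and since handle slides preserve the symplectomorphism type of~$\Sigma$, stringing these operations together indeed produces a contactomorphism from $(M_1,\xi_1)$ to $(M_2,\xi_2)$.
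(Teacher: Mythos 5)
Your proposal is correct and follows essentially the same route as the paper: reduce each diagram to split Legendrian unknots via moves~I and~II, identify $c_1(\xi_i)$ with the rotation-number vector, realise $\phi\in\GL(m,\Z)$ by handle slides (adding rotation numbers) and orientation reversals (sign changes), and conclude by pairing unknots with equal rotation numbers via Proposition~\ref{prop:S2S3}. The only cosmetic difference is that the paper phrases the reduction of $\phi$ as elementary row and column operations applied to the two diagrams separately, while you apply the full transformation to one diagram; the substance is identical.
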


\begin{proof}
The assumption on the existence of subcritical Stein fillings implies that
the $(M_i,\xi_i)$ can be realised as contact open books whose
monodromy is the identity. With the help of moves I and~II,
the corresponding diagrams (which, by assumption, contain no
$1$-handles) can be turned into
a collection of unlinked Legendrian unknots. If one so wishes,
one can arrange the rotation number of at most one of the knots
to be odd by the argument in the preceding proposition. In particular,
we see that the $M_i$ are diffeomorphic to a connected sum
of copies of $S^2\times S^3$, possibly with one additional
summand $S^2\,\tilde\times\,S^3$.

Fixing an ordering of the unknots in the respective diagram, and an
orientation for each unknot, amounts to fixing an identification
of $H^2(M_i)$ with $\Z^k$, where $k\in\N$ is the number of
unknots in either diagram. Then $\phi$ may be regarded as an element
of $\GL (k,\Z)$. By elementary row and column operations (over the
integers), this matrix can be converted to the identity matrix.
So it suffices to show that these elementary operations can
be effected by a change in the respective diagram. The
condition $\phi(c_1(\xi_1))=c_1(\xi_2)$ implies
(by Proposition~\ref{prop:S2S3} and its proof) that
between the two new diagrams there is a bijective pairing
of Legendrian unknots with the same rotation number,
which amounts to the claimed contactomorphism.

Here are the diagrammatic realisations of the elementary row
and column operations, the former being changes in the diagram
for~$M_2$, the latter in that of~$M_1$.

1. Add a row/column to another one: this amounts to a handle slide,
see Figure~\ref{fig:cohom_map1}.

\begin{figure}[htp]
\labellist
\small\hair 2pt
\pinlabel $e_i$ [bl] at 23 234
\pinlabel $e_i$ [bl] at 23 153
\pinlabel $e_i+e_j$ [bl] at 23 64
\pinlabel $e_j$ [bl] at 168 190
\pinlabel $e_j$ [bl] at 168 107
\pinlabel $e_j$ [bl] at 168 19
\endlabellist
\centering
\includegraphics[scale=0.8]{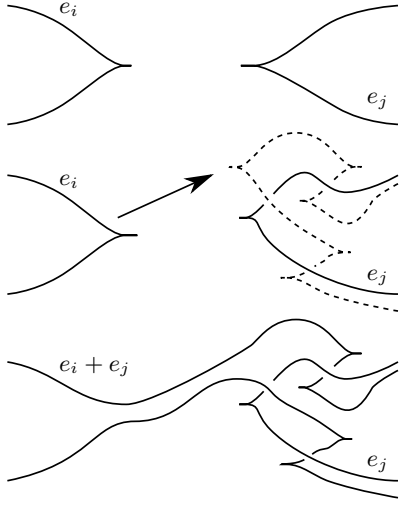}
  \caption{Sliding the handles to get the desired map on cohomology.}
  \label{fig:cohom_map1}
\end{figure}

2. Multiply a row/column by~$-1$:
a change of sign of one of the generators of $\Z^k$ simply amounts to
changing the orientation on the corresponding unknot. This reverses the
sign of the rotation number of that unknot.
\end{proof}

\begin{rem}
M.-L.~Yau~\cite{yau04} has shown that the contact homology
of a subcritically Stein fillable contact manifold $(M,\xi)$
is determined by the homology of the manifold, provided
$c_1(\xi)$ evaluates to zero on $\pi_2(M)$ (i.e.\
homology classes represented by maps $S^2\rightarrow M$).
In particular, if we write the subcritical Stein filling of
a $5$-dimensional contact manifold as $\Sigma\times D^2$, then
$HC_2(M,\xi )\cong H_2(\Sigma)\cong H_2(M)$.
In view of this result, the theorem above says that
contact homology is a complete invariant for $5$-dimensional contact
manifolds $(M,\xi )$ that admit a subcritical
Stein filling without $1$-handles and with $c_1(\xi)=0$.
\end{rem}
\section{Diagrams for subcritically fillable contact $5$-manifolds}
\label{section:subcritical}
In this section we prove a result that goes some way towards classifying
subcritically fillable contact $5$-manifolds and their Stein fillings.
By realising Tietze moves on group presentations via handle
moves in Kirby diagrams, we show that subcritically fillable
contact $5$-manifolds are determined, up to a connected sum with
copies of $S^2\times S^3$ and $S^2\,\tilde\times\,S^3$, by their
fundamental group; see Theorem~\ref{thm:classification}
for the precise formulation. The corresponding classification
of $6$-dimensional subcritical Stein manifolds up to
symplectomorphism is formulated in Corollary~\ref{cor:classification_Stein}.
\subsection{Tietze moves and Legendrian isotopies}
Let $\langle g_1,\ldots,g_k |r_1,\ldots,r_l \rangle$ be a finite
presentation of a group~$G$. We can then realise this group $G$ as the
fundamental group of a Stein surface by associating a $1$-handle with
each of the generators $g_1,\ldots,g_k$, and an oriented
Legendrian attaching circle
with each of the relations $r_1,\ldots,r_l$. Our conventions for
the translation from a group presentation to a Kirby diagram
in standard form as in \cite{gomp98} are as follows.

\begin{itemize}
\item[(i)] The $1$-handles are represented by horizontal pairs
of attaching balls.
\item[(ii)] A Legendrian curve going over the $1$-handle corresponding to a
generator $g$ in the way shown in Figure~\ref{figure:one-handle1}
is read as the letter $g$ in the word represented by that curve.
\item[(iii)] The relations, which are words in the generators, are translated
into a curve by reading the word from left to right.
\end{itemize}

\begin{figure}[htp]
\centering
\includegraphics[scale=0.5]{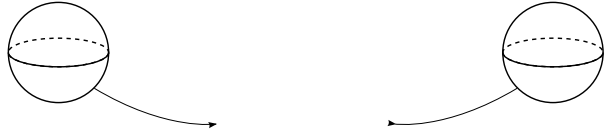}
  \caption{Attaching circle going over a $1$-handle.}
  \label{figure:one-handle1}
\end{figure}
\subsubsection{Equivalent diagrams}
\label{section:trivial_monodromy_equivalence}
Since we are dealing with subcritical Stein fillings,
we may restrict our attention to open books having trivial monodromy.
So we are free to use the moves introduced in Section~\ref{section:moves};
these do not change the contact manifold or its filling.

\begin{itemize}
\item[(i)] Move~I: replace a Legendrian attaching knot $K$ by
its double Legendrian stabilisation $S_+S_-K$.
\item[(ii)] Move~II: change crossings of the Legendrian attaching circles
at the price of adding Legendrian stabilisations.
\end{itemize}

In particular, the crossings of the Legendrian attaching circles
are ultimately of no importance.

In order to realise all Tietze moves on group presentations ---
these moves will be described presently --- we need to
allow one further change in the diagram:

\begin{itemize}
\item[(iii)] Add an unlinked standard Legendrian unknot $K_0$ with
$\tb (K_0)=-1$ to the diagram.
\end{itemize}

On the level of contact $5$-manifolds, this third move
corresponds to taking the connected sum with $(S^2\times S^3,\xi_0)$,
see Proposition~\ref{prop:S2S3}.

We remark that the particular points where the attaching circles go over
the $1$-handles are irrelevant. This is illustrated in
Figure~\ref{figure:changeorder} (without loss of generality, strand 2
is assumed to be Legendrian stabilised).

\begin{figure}[htp]
\labellist
\small\hair 2pt
\pinlabel $1$ [l] at 211 324
\pinlabel $1$ [l] at 211 189
\pinlabel $1$ [l] at 211 57
\pinlabel $2$ [l] at 211 283
\pinlabel $2$ [l] at 211 149
\pinlabel $2$ [l] at 211 22
\endlabellist
\centering
\includegraphics[scale=0.5]{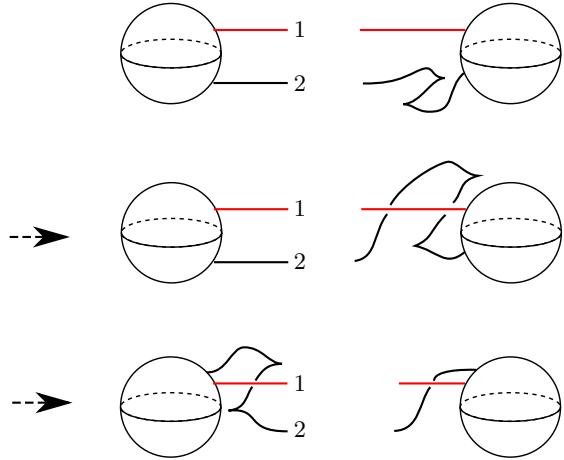}
  \caption{Changing the order of attaching points.}
  \label{figure:changeorder}
\end{figure}
\subsubsection{Tietze moves}
According to the Tietze theorem~\cite[pp.~43--4]{crfo77},
any two finite presentations of a given group are related by a sequence
of the following two moves.

\vspace{1mm}

{\bf T 1.}
Add or remove a relation
$s$ that is a so-called consequence of the other relations:
\[ \langle g_1,\ldots ,g_k | r_1,\ldots, r_l \rangle \leftrightsquigarrow
\langle g_1,\ldots ,g_k | r_1,\ldots, r_l,s \rangle\]
For the relation $s$ to be  a {\bf consequence} of the relations
$r_1,\ldots ,r_l$ means that $s$ (which is an element
of the free group generated by $g_1,\ldots ,g_k$) is contained in
every normal subgroup that contains $r_1,\ldots ,r_l$.

This move, read from left to right, can be written as
a sequence of the following submoves.

\begin{itemize}
\item[(i)] Double a relation or add an inverse of a relation:
\[ \langle g_1,\ldots ,g_k | r_1,\ldots, r_l \rangle \rightsquigarrow
\langle g_1,\ldots ,g_k | r_1,r_1^{\pm 1},\ldots, r_l \rangle\]
\item[(ii)] Conjugate a relation by a generator:
\[r \rightsquigarrow g r g^{-1} \text{ or } g^{-1}rg\]
\item[(iii)] Replace one relation by its product with another relation:
\[ r_1,r_2 \rightsquigarrow r_1,r_1r_2 \text{ or }
r_1,r_2r_1\]
\end{itemize}

\vspace{1mm}

{\bf T 2.}
Add or remove a generator $g$ and a relation $g=w$
expressing $g$ as a word $w$ in the other generators
and their inverses:
\[ \langle g_1,\ldots,g_k|r_1,\ldots ,r_l \rangle 
\leftrightsquigarrow 
\langle g_1,\ldots ,g_k,g | r_1,\ldots, r_l,gw^{-1} \rangle\]
\subsubsection{Realising the Tietze moves}
\label{section:realisingT}
We now want to show that these moves correspond to Legendrian isotopies of
the attaching circles and handle cancellations, modulo the equivalences
described in Section~\ref{section:trivial_monodromy_equivalence}.

\vspace{1mm}

{\bf T 1.} (i)
A relation represented by a Legendrian attaching circle $K$
can be doubled by adding the Legendrian push-off of $K$ to the diagram.
Giving this push-off the opposite orientation amounts to
adding the inverse relation.
By \cite[Proposition~2]{dige09}, this Legendrian push-off is
Legendrian isotopic to a meridian of~$K$. With the help of crossing
changes we can disentangle this meridian from the rest of the diagram.
In other words, we can double a relation by first adding
a standard Legendrian unknot to the diagram, i.e.\ by taking
a connected sum with $(S^2\times S^3,\xi_0)$, and then turning this
into a push-off of $K$ with the help of moves I (and
its inverse) and~II.

\vspace{1mm}

{\bf T 1.} (ii)
The Legendrian knot representing $grg^{-1}$ as shown in
Figure~\ref{figure:tietze1ii} is Legendrian isotopic to that
representing~$r$: first move the cusp formed by the strands $1$ and
$4$ over the $1$-handle~$g$, then perform a first Reidemeister move.

\begin{figure}[htp]
\labellist
\small\hair 2pt
\pinlabel $g$ [r] at 21 395
\pinlabel $1$ [tl] at 359 197
\pinlabel $2$ [tr] at 155 197
\pinlabel $3$ [bl] at 134 271
\pinlabel $4$ [br] at 366 271
\pinlabel $r$ [tl] at 279 18
\pinlabel $grg^{-1}$ [tl] at 400 307
\endlabellist
\centering
\includegraphics[scale=0.45]{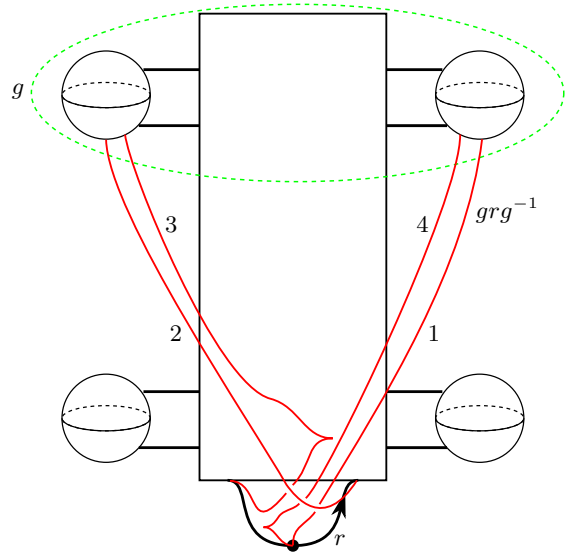}
  \caption{The Tietze move {\bf T 1} (ii).}
  \label{figure:tietze1ii}
\end{figure}

In Figure~\ref{figure:tietze1ii} we have indicated a base point
that one needs to fix in order to set up a one-to-one
correspondence between loops at this base point and words
in the generators. For the Tietze move, however, one
only needs to check that attaching a handle along
$r$ has the same effect as attaching it along $grg^{-1}$, and
for that the described Legendrian isotopy is not required to fix
the base point

\vspace{1mm}

{\bf T 1.} (iii)
This corresponds to a handle slide or second Kirby move in the sense
of~\cite{dige09}.

\vspace{1mm}

{\bf T 2.}
In Figure~\ref{fig:tietze2-1} we have indicated the relation $g=w$,
where the word $w$ is supposed to be given by a curve that may
go over the $1$-handles corresponding to the generators $g_1,\ldots ,g_k$.
We need to show that the generator $g$ and this relation
can be cancelled by diagram moves.

\begin{figure}[htp]
\labellist
\small\hair 2pt
\pinlabel $g$ [r] at 46 376
\pinlabel $w^{-1}$ at 254 212
\endlabellist
\centering
\includegraphics[scale=0.45]{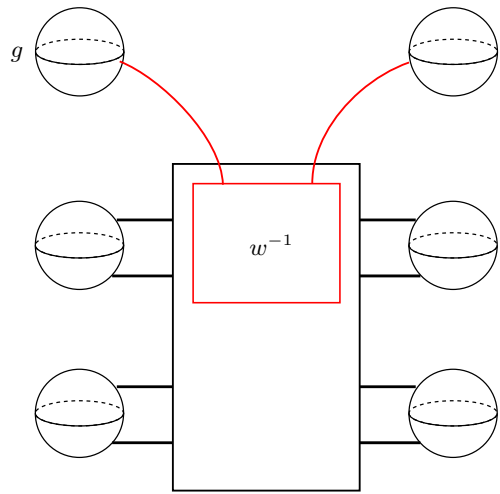}
  \caption{The relation $g=w$.}
  \label{fig:tietze2-1}
\end{figure}

Slide the right-hand attaching ball of the $1$-handle $g$ along the
Legendrian curve representing the word~$w^{-1}$; this can be done via
a contact isotopy. In the process, we ``accumulate'' cusps.
See Figure~\ref{fig:tietze2-2} for an example.

\begin{figure}[htp]
\centering
\includegraphics[scale=0.5]{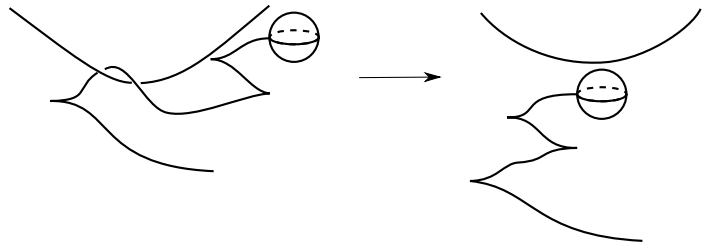}
  \caption{Sliding the attaching ball of a $1$-handle.}
  \label{fig:tietze2-2}
\end{figure}

Positive and negative Legendrian stabilisations can be removed in pairs
using the light bulb trick~\cite[Figure~21]{dige09}. So
ultimately we obtain a diagram as shown in
Figure~\ref{fig:tietze2-3}, cf.~\cite[Section~5]{dige10}.

\begin{figure}[htp]
\labellist
\small\hair 2pt
\pinlabel $g$ [r] at 46 384
\endlabellist
\centering
\includegraphics[scale=0.45]{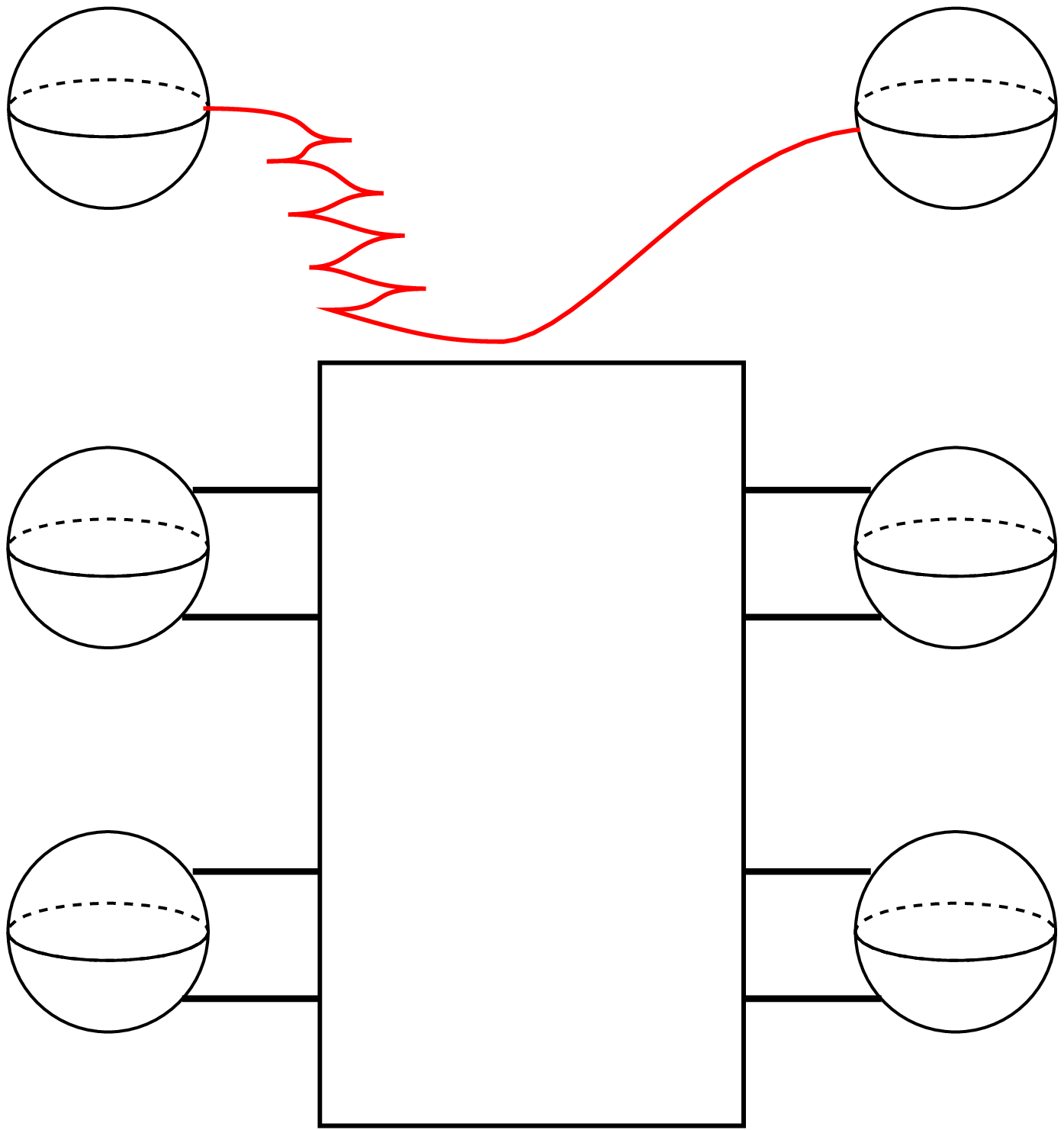}
  \caption{A cancelling handle pair.}
  \label{fig:tietze2-3}
\end{figure}

Then the $1$-handle $g$ and the $2$-handle attached along the
Legendrian circle going once over the $1$-handle form a cancelling
pair. {\em Topologically\/} this follows because the attaching
circle of the $2$-handle intersects the belt sphere of the $1$-handle
in exactly one point, see~\cite[Proposition~4.2.9]{gost99}.
In fact, this diagram with a cancelling handle pair also
represents the standard $4$-disc {\em symplectically\/},
see~\cite{vkoe}. In the given dimension, this is
also a consequence of the deep result of Gromov~\cite[p.~311]{grom85},
cf.~\cite[Theorem~9.4.2]{mcsa04},
that any strong symplectic filling of $(S^3,\xi_{\mathrm{st}})$
not containing homologically non-trivial $2$-spheres (in 
particular a filling known to be topologically a disc) is
{\em symplectomorphic} to the $4$-disc.
\subsection{Classification of subcritically fillable contact $5$-manifolds}
As a first step towards showing how the fundamental group
determines subcritically fillable contact $5$-manifolds,
the following lemma tells us that two diagrams representing isomorphic
fundamental groups can be turned into two diagrams
giving identical group presentations.

\begin{lem}
\label{lem:identical_presentations}
Let $(M,\xi)$ and $(M',\xi')$ be two subcritically Stein fillable contact
$5$-manifolds with $\pi_1(M)\cong \pi_1(M')$. Then there are
$k,k'\in\N_0$ and diagrams $D,D'$ for
\[ (M,\xi)\# k(S^2\times S^3,\xi_0)\;\;\;\mbox{and}\;\;\;
(M',\xi')\# k'(S^2\times S^3,\xi_0),\]
respectively, such that the presentations
of $\pi_1(M),\pi_1(M')$ determined by $D,D'$ are identical.
\end{lem}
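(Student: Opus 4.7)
The plan is to reduce the assertion to the Tietze theorem and then invoke the diagrammatic realisations of Tietze moves from Section~\ref{section:realisingT}.

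First I would choose arbitrary diagrams $D_1, D_2$ for $(M,\xi)$ and $(M',\xi')$; these exist by Proposition~\ref{prop:subcritical_monodromy}, with trivial monodromy. They yield presentations $P_1 = \langle g_1,\ldots,g_p \mid r_1,\ldots,r_q\rangle$ and $P_2 = \langle h_1,\ldots,h_s \mid t_1,\ldots,t_u\rangle$ of $\pi_1(M)$ and $\pi_1(M')$. Using the given isomorphism $\Phi\co \pi_1(M)\to\pi_1(M')$, choose words $w_i$ in the $g_\bullet$ representing $\Phi^{-1}(h_i)$ and words $v_j$ in the $h_\bullet$ representing $\Phi(g_j)$, and form the common refinement
\[
P = \langle g_\bullet, h_\bullet \mid r_\bullet,\, t_\bullet,\, h_i w_i^{-1},\, g_j v_j^{-1} \rangle,
\]
which still presents the same group.

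Next I would exhibit $P$ as the result of purely \emph{additive} Tietze moves applied to $P_1$, and symmetrically to $P_2$. From $P_1$: apply T2 repeatedly to introduce each new generator $h_i$ together with its defining relation $h_i = w_i$; then apply T1(i)/T1(iii) to adjoin each $t_j$ and each $g_j v_j^{-1}$, all of which are consequences of the existing relations because they already hold in the presented group. Starting instead from $P_2$ and swapping the roles of the $g$'s and $h$'s produces the same $P$.

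Finally I would translate each of these two move sequences into diagram modifications using Section~\ref{section:realisingT}. Each T2 move (adding a generator with its defining relation) corresponds to introducing a cancelling $1$-$2$ handle pair and does not change the ambient contact manifold. Each use of T1(i) to adjoin an auxiliary consequence relation requires, as explained in Section~\ref{section:realisingT}, first taking a connected sum with $(S^2\times S^3,\xi_0)$ to introduce a standard Legendrian unknot, then using moves I and II to reposition it so that it represents the desired word. Handle slides T1(iii) and conjugations T1(ii) are realised by Legendrian isotopies within the existing diagram. Applied to $D_1$ and $D_2$, this yields diagrams $D$ and $D'$ for $(M,\xi)\#k(S^2\times S^3,\xi_0)$ and $(M',\xi')\#k'(S^2\times S^3,\xi_0)$ with identical underlying presentations equal to~$P$, where $k$ and $k'$ are the respective numbers of T1(i) moves applied on each side.

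The main subtlety I expect is that the Tietze theorem in its usual formulation produces sequences of moves in both directions, and the subtractive direction of T1(i) does not obviously correspond to removing an $(S^2\times S^3,\xi_0)$ summand from a diagram. I avoid this by constructing $P$ explicitly as a common additive refinement of $P_1$ and $P_2$, so that only the forward realisations from Section~\ref{section:realisingT} are ever invoked; this also gives the explicit bounds on $k,k'$.
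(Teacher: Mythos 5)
Your proposal is correct and is essentially the paper's own argument: both take diagrams with trivial monodromy, regard them as group presentations, and bring the two presentations to a common refinement using only the additive direction of the Tietze moves, realising each relation-adding move {\bf T\,1}\,(i) as a connected sum with $(S^2\times S^3,\xi_0)$ and the remaining moves as handle slides, Legendrian isotopies and cancelling $1$-/$2$-handle pairs that leave the contact manifold unchanged. The only step you leave implicit is why the presentation read off from the diagram is a presentation of $\pi_1(M)$ rather than merely of the page, i.e.\ $\pi_1(M)\cong\pi_1(\Sigma)$, which the paper justifies by Seifert--van Kampen applied to $M=\Sigma\times S^1\cup_{\partial}\partial\Sigma\times D^2$.
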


\begin{proof}
Write $(M,\xi)=\open(\Sigma,\id)$, so that the subcritical Stein
filling is given by $\Sigma\times D^2$, and similarly for~$M'$.
The theorem of Seifert and van Kampen, applied to the decomposition
\[ M=\Sigma\times S^1\cup_{\partial}\partial\Sigma\times D^2, \]
shows that $\pi_1(M)\cong\pi_1(\Sigma)$. So any choice of Kirby diagram
for $\Sigma$ and $\Sigma'$ gives rise to two presentations of the same group.

Now invoke Tietze's theorem and our implementation of the Tietze moves.
Since we merely want to turn both group presentations into
identical ones by appropriate changes in the diagrams, rather than
converting one presentation to the other,
we need the Tietze move {\bf T~1} only going from left to right, i.e.\
we do not care about accumulating redundant relations.
As we saw in Section~\ref{section:realisingT}, the move {\bf T~1}~(i)
amounts to a connected sum with $(S^2\times S^3,\xi_0)$;
the other moves do not change the contact manifold.
\end{proof}

This lemma does not say anything about the contact framings of
the Legendrian knots in the diagrams $D,D'$. In particular,
we cannot expect that $M\# k(S^2\times S^3)$ and
$M'\# k'(S^2\times S^3)$ are diffeomorphic, in general.
The most simple example would be $M=S^2\times S^3$ and
$M'=S^2\,\tilde\times\,S^3$, where we could take $k=k'=0$.

The following lemma is the key to showing that a summand
$(S^2\,\tilde\times\,S^3,\xi_1)$ will give us complete control
over the contactomorphism type of the resulting manifold.

\begin{lem}
\label{lem:realise_legendrian_stabilisation}
Let $\Sigma$ be a Stein surface containing at least one $2$-handle
attached along a Legendrian knot~$K$. Let $\Sigma_{\pm}$ be
the Stein surface where this one attaching circle is
replaced by its Legendrian stabilisation $S_{\pm}K$. Then
\[ \open (\Sigma,\id)\# (S^2\,\tilde\times\,S^3,\xi_1)\cong
\open (\Sigma_{\pm},\id)\# (S^2\,\tilde\times\,S^3,\xi_1).\]
\end{lem}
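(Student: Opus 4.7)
My plan is to exhibit a sequence of handle moves transforming the Kirby diagram representing the left-hand side into one representing the right-hand side. By Proposition~\ref{prop:S2S3}, taking the connected sum with $(S^2\,\tilde\times\,S^3,\xi_1)$ amounts to adjoining, unlinked from the rest, a standard Legendrian unknot $K_1$ with $\rot(K_1)=\pm 1$ and $\tb(K_1)$ any even integer $\le -2$ (adjustable by Move~I). Orienting $K_1$ so that $\rot(K_1)=+1$ treats the ``$+$'' case of the lemma; the ``$-$'' case follows from the opposite orientation.

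The key step is a Legendrian handle slide of $K$ over $K_1$, realised by a Legendrian isotopy of the link in the spirit of \cite[Proposition~1]{dige09}. This replaces $K$ by a Legendrian knot $K''$ satisfying $\rot(K'')=\rot(K)+\rot(K_1)=\rot(S_+K)$; the strand of $K''$ that used to belong to $K_1$ now runs once over the $2$-handle formerly attached along $K_1$. I would then apply Move~II to remove all crossings of this new strand with the other Legendrian attaching circles of $\Sigma$, reducing to a diagram in which the new strand forms a small loop near the original location of $K_1$.

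The next step is to show that, after further handle slides and repeated applications of Moves~I and~II, this diagram is equivalent to the diagram consisting of $S_+K$ at the original location of $K$, together with a fresh unlinked standard Legendrian unknot $K_1'$ with $\rot(K_1')=+1$. The pictorial argument is analogous to, but finer than, the one carried out in the proof of Proposition~\ref{prop:connect_sum_S2S3} and illustrated in Figure~\ref{fig:connected_sum_S2S3}: additional stabilisations introduced by Move~II are cancelled in pairs by the inverse of Move~I, and the small persisting loop is the new unknot $K_1'$, which inherits $\rot=+1$ from $K_1$ and hence again represents $(S^2\,\tilde\times\,S^3,\xi_1)$.

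The main obstacle lies in the bookkeeping of the contact framing: the stabilisation $S_+$ changes $\tb$ by $-1$, whereas Moves~I and~II change $\tb$ only by multiples of $2$, so the required odd change in the contact framing of $K$ must be produced entirely by the handle slide itself. The value $\tb(K_1)$ and the precise shape of the Legendrian band realising the slide must therefore be calibrated so that the net effect, after all Move~I and Move~II cleanup, is precisely a single positive stabilisation of $K$ together with the extraction of a new standard Legendrian unknot of rotation number $+1$. Executing this carefully would amount to displaying an explicit Kirby-style sequence of figures in the spirit of Figures~\ref{fig:move1}, \ref{fig:move2} and~\ref{fig:connected_sum_S2S3}.
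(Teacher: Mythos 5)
Your proposal follows essentially the same route as the paper's proof: represent the $(S^2\,\tilde\times\,S^3,\xi_1)$ summand by an unlinked Legendrian unknot of rotation number $\pm 1$ (the ``shark''), slide $K$ over it with one or the other orientation, and then use Moves~I and~II exactly as in Figure~\ref{fig:connected_sum_S2S3} to arrive at $S_{\pm}K$ together with a fresh shark, with the correct $\rot$/$\tb$ bookkeeping. The only blemish is the phrase that the slid strand ``runs once over the $2$-handle formerly attached along $K_1$'' --- after a handle slide the new attaching circle is a band sum with a push-off of $K_1$ (and so links $K_1$), not a curve going over a handle --- but this does not affect the argument.
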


\begin{proof}
The contact manifold $\open(\Sigma,\id)\#
(S^2\,\tilde\times\,S^3,\xi_1)$
is represented by a diagram for $\Sigma$ with one additional shark.
After a handle slide of $K$ over the shark (with one or the
other orientation), and by applying moves I and~II, we obtain
a diagram with $K$ replaced by $S_{\pm}K$, see
Figure~\ref{fig:stabilise_knot}. In (i) we have added a shark
to the given diagram, i.e.\ formed the connected sum with
$(S^2\,\tilde\times\,S^3,\xi_1)$. We then perform a handle slide
over the shark to obtain~(ii). From there one gets to (iii) and (iv)
as in Figure~\ref{fig:connected_sum_S2S3}.
\end{proof}

\begin{figure}[htp]
\labellist
\small\hair 2pt
\pinlabel {\small (i)} [r] at 25 251
\pinlabel {\small (ii)} [r] at 533 251
\pinlabel {\small (iii)} [r] at 25 86
\pinlabel {\small (iv)} [r] at 533 86
\endlabellist
\centering
\includegraphics[scale=0.4]{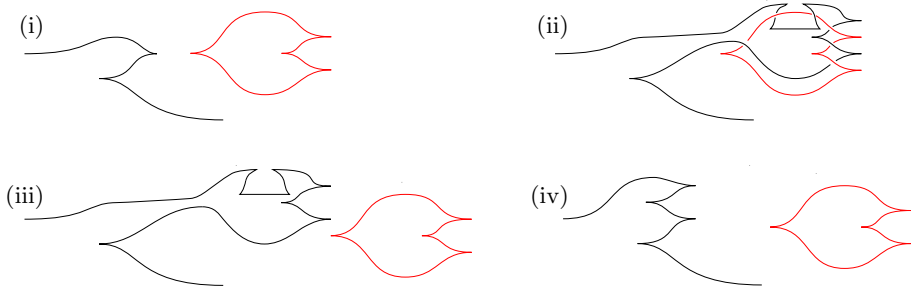}
  \caption{Handle slide turning $K$ into $S_{\pm}K$.}
  \label{fig:stabilise_knot}
\end{figure}

Here is our main classification result.

\begin{thm}
\label{thm:classification}
Let $(M,\xi)$ and $(M',\xi')$ be two subcritically Stein fillable contact
$5$-mani\-folds with $\pi_1(M)\cong \pi_1(M')$.
Then there are $k,k'\in\N_0$ with $k-k'=\rank H_2(M')-\rank H_2(M)$
such that
\[ (M,\xi)\,\#\, k(S^2\times S^3,\xi_0)\,\#\,
(S^2\,\tilde\times\,S^3,\xi_1)\cong
(M',\xi')\,\#\, k'(S^2\times S^3,\xi_0)\,\#\,
(S^2\,\tilde\times\,S^3,\xi_1).\]
\end{thm}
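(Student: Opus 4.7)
The plan is to reduce the classification to matching two Kirby diagrams of Stein surfaces handle-by-handle, with the $(S^2\,\tilde\times\, S^3,\xi_1)$ summand serving as an inexhaustible source of Legendrian stabilisations. First I would apply Lemma~\ref{lem:identical_presentations} to produce diagrams $D,D'$ for $(M,\xi)\,\#\, k(S^2\times S^3,\xi_0)$ and $(M',\xi')\,\#\, k'(S^2\times S^3,\xi_0)$ whose induced presentations of $\pi_1(M)\cong\pi_1(M')$ coincide. This gives a canonical bijection between the $1$-handles of $D$ and $D'$ and between their $2$-handles, where paired $2$-handles are attached along Legendrian knots $K_i$ in $D$ and $K_i'$ in $D'$ representing the same word in the generators.

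Next I would take the connected sum with one further copy of $(S^2\,\tilde\times\, S^3,\xi_1)$ on each side, producing diagrams $\tilde D, \tilde D'$ each with an additional shark. By Lemma~\ref{lem:realise_legendrian_stabilisation} this shark licenses a Legendrian stabilisation $S_\pm$ of any single attaching circle without changing the contactomorphism type of the $5$-manifold. The crucial observation is that the shark is regenerated in the conclusion of Lemma~\ref{lem:realise_legendrian_stabilisation}, so a single $(S^2\,\tilde\times\, S^3,\xi_1)$ summand suffices to perform arbitrarily many stabilisations $S_+^{a}S_-^{b}$ on arbitrarily many of the $K_i$ and $K_i'$.

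The core step is then to render each pair $(K_i,K_i')$ Legendrian isotopic. First Move~II brings $K_i$ and $K_i'$ to a topologically standard form depending only on their common word in the generators, making them topologically isotopic in $(\#_k(S^1\times S^2),\xi_{\mathrm{st}})$. Using the shark, I would then stabilise each knot individually to equalise the rotation numbers, noting that the parity condition $\tb+\rot\equiv\mbox{const}\pmod 2$ on a topological class guarantees this is arithmetically possible. With the rotation numbers matched, the Fuchs--Tabachnikov stabilisation theorem in the $1$-handlebody version of Ding--Geiges (cf.~\cite{dige}) produces a Legendrian isotopy between $K_i$ and $K_i'$ after a common sequence of further stabilisations, again implemented via the shark. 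Carrying out this matching procedure simultaneously for every $i$ turns $\tilde D$ into $\tilde D'$ as Kirby diagrams of Stein surfaces, and their open-book contact boundaries are therefore contactomorphic. The numerical identity $k-k'=\rank H_2(M')-\rank H_2(M)$ is then automatic, since each of $(S^2\times S^3,\xi_0)$ and $(S^2\,\tilde\times\, S^3,\xi_1)$ contributes $1$ to $\rank H_2$ under connected sum and the two final manifolds are diffeomorphic.

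The hard part will be the third paragraph: producing the Fuchs--Tabachnikov-type isotopies in the multi-handlebody $\#_k(S^1\times S^2)$, and coordinating the rotation-number adjustments, common $\tb$-matching stabilisations, and stabilised Legendrian isotopies across all pairs of attaching circles using a single shark. Lemma~\ref{lem:realise_legendrian_stabilisation} is essential precisely because it returns the shark in place after each stabilisation, so one and the same $(S^2\,\tilde\times\, S^3,\xi_1)$ summand can fuel every individual and common stabilisation needed in the diagram.
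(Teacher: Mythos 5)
Your outline is essentially the paper's own proof: Lemma~\ref{lem:identical_presentations} to get identical presentations, move~II to make the attaching links topologically identical, Fuchs--Tabachnikov to upgrade the topological isotopy to a Legendrian isotopy of stabilised knots, and Lemma~\ref{lem:realise_legendrian_stabilisation} (with the shark regenerated, exactly as you observe) to realise those stabilisations without changing the contactomorphism type. The one step that is not correct as stated is your ``canonical bijection between the $2$-handles of $D$ and $D'$'': identical presentations only pair off the attaching circles that actually contribute relations, while either diagram may in addition contain Legendrian knots not going over any $1$-handle, and in general the two diagrams have different numbers of $2$-handles (already for $M=S^5$, $M'=S^2\times S^3$ there is nothing to pair the single unknot with). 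So the numerical relation $k-k'=\rank H_2(M')-\rank H_2(M)$ is not an afterthought extracted from the final diffeomorphism; rather, $k$ and $k'$ must be chosen at this stage to pad the diagrams with unlinked standard unknots until the $2$-handle counts agree, and any surplus unknot of odd rotation number (an $S^2\,\tilde\times\,S^3$ summand) has to be converted into an $S^2\times S^3$ summand using the retained $(S^2\,\tilde\times\,S^3,\xi_1)$ summand via Proposition~\ref{prop:connect_sum_S2S3} -- equivalently, by the same shark-assisted stabilisation-and-isotopy argument you describe for the paired knots. With that amendment (which the paper makes explicitly: ``topologically identical up to a finite number of unlinked unknots'', then Proposition~\ref{prop:connect_sum_S2S3} at the end), your argument coincides with the published one; your extra step of pre-matching rotation numbers is harmless but unnecessary, since Fuchs--Tabachnikov allows different numbers of positive and negative stabilisations on the two knots and Lemma~\ref{lem:realise_legendrian_stabilisation} supplies each of them individually.
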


\begin{proof}
By Lemma~\ref{lem:identical_presentations} there are $k_0,k_0'\in\N_0$
such that $(M,\xi)\# k_0(S^2\times S^3,\xi_0)$ and
$(M',\xi')\# k'_0(S^2\times S^3,\xi_0)$ are represented by diagrams
that give identical presentations of $\pi_1(M)\cong\pi_1(M')$.
Notice that the two diagrams may contain Legendrian knots that do
not go over the $1$-handles and hence do not contribute
to the fundamental group or its presentation.

With move~II we can change the crossings of the Legendrian attaching
circles, so we may assume that the two diagrams are topologically
identical, up to a finite number of unlinked unknots. These unlinked unknots
correspond to summands $S^2\times S^3$ or $S^2\,\tilde\times\,S^3$ with
one of the standard contact structures described in
Proposition~\ref{prop:S2S3}.

By the theorem of Fuchs and Tabachnikov~\cite[Theorem~4.4]{futa97},
this topological isotopy can be
realised as a Legendrian isotopy of suitable stabilisations of the
Legendrian knots. As shown in the preceding lemma, a summand
$(S^2\,\tilde\times\,S^3,\xi_1)$ enables us to perform such
Legendrian stabilisations.

Finally, Proposition~\ref{prop:connect_sum_S2S3} allows us,
up to contactomorphism, to turn all but one summand
$(S^2\,\tilde\times\,S^3,\xi_1)$ into
$(S^2\times S^3,\xi_0)$.
\end{proof}

It is possible to combine this argument with that for
Theorem~\ref{thm:c_1_determines_5manifold} in order to formulate
a more precise classification result that involves conditions on
the first Chern class, but we have opted for the more
transparent formulation of the statement.

The moves in the proof of Theorem~\ref{thm:classification}
all extend to symplectomorphisms of the filling, so the
following corollary is immediate. Here we write $(S^2\times D^4,\omega_0)$
for the standard filling of $(S^2\times S^3,\xi_0)$,
and $(S^2\,\tilde\times\,D^4,\omega_1)$ for that of
$(S^2\,\tilde\times\,S^3,\xi_1)$.
`Symplectomorphism' is to be understood in the sense of
Remark~\ref{rem:extension}.

\begin{cor}
\label{cor:classification_Stein}
Let $W$ and $W'$ be two compact subcritical Stein manifolds of dimension $6$
with $\pi_1(W)\cong \pi_1(W')$. 
Then there are $k,k'\in\N_0$ with $k-k'=\rank H_2(W')-\rank H_2(W)$
such that the boundary connected sums
\[ W\,\natural\, k(S^2\times D^4,\omega_0)\,\natural\,
(S^2\,\tilde\times\,D^4,\omega_1)\]
and
\[ W'\,\natural\, k'(S^2\times D^4,\omega_0)\,\natural\,
(S^2\,\tilde\times\,D^4,\omega_1)\]
are symplectomorphic.\qed
\end{cor}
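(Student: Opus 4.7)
The plan is to show that the argument proving Theorem~\ref{thm:classification} lifts, move by move, from the contact boundary to the Stein filling. First, Cieliebak's theorem (invoked in the proof of Proposition~\ref{prop:subcritical_monodromy}) identifies any compact subcritical Stein $6$-manifold $W$ with $\Sigma\times D^2$ for a Stein surface~$\Sigma$, so that $\partial W=\open(\Sigma,\id)$; and similarly $W'\cong\Sigma'\times D^2$. By Seifert--van Kampen we have $\pi_1(W)\cong\pi_1(\Sigma)\cong\pi_1(\partial W)$, so the hypothesis transfers to the Stein pages. My aim is therefore to run the proof of Theorem~\ref{thm:classification} on the pages $\Sigma,\Sigma'$ themselves and then take products with~$D^2$.

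Second, I would check that each operation used in the boundary argument is realised by a genuine symplectic operation on the page (and hence, after crossing with $D^2$, on the filling). Kirby handle slides on $\Sigma$ and Legendrian isotopies of attaching circles extend to symplectomorphisms of $\Sigma$ via the symplectisation trick spelled out in Remark~\ref{rem:extension}. Legendrian stabilisations and their inverses are realised by attaching (respectively cancelling) a symplectic handle pair, as noted at the end of Section~\ref{section:realisingT} with reference to~\cite{vkoe} and the Gromov filling result. The three replacement moves on diagrams used in Section~\ref{section:realisingT} to realise the Tietze moves are compositions of these, so all of them extend. Finally, a contact connected sum with $(S^2\times S^3,\xi_0)$ or $(S^2\,\tilde\times\,S^3,\xi_1)$ on the boundary is induced by a boundary connected sum of Stein pages with the $2$-disc bundles $S^2\times D^2$ or $S^2\,\tilde\times\,D^2$, and this is compatible with the contact-symplectic connect-sum construction recalled in Section~\ref{section:sum}.

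Third, I would apply the proof of Theorem~\ref{thm:classification} verbatim to the pages, yielding integers $k,k'\in\N_0$ and a symplectomorphism
\[ \Sigma\,\natural\, k(S^2\times D^2)\,\natural\,(S^2\,\tilde\times\,D^2)
\cong
\Sigma'\,\natural\, k'(S^2\times D^2)\,\natural\,(S^2\,\tilde\times\,D^2).\]
Taking the product with $D^2$ and smoothing corners identifies $(S^2\times D^2)\times D^2$ with $S^2\times D^4$ and $(S^2\,\tilde\times\,D^2)\times D^2$ with $S^2\,\tilde\times\,D^4$, so this gives the desired symplectomorphism of $6$-dimensional fillings. The homological constraint $k-k'=\rank H_2(W')-\rank H_2(W)$ is then forced: each $S^2\times D^4$ summand raises $\rank H_2$ of the filling by one, while the $S^2\,\tilde\times\,D^4$ summands contribute equally on both sides, so equality of $\rank H_2$ on the two sides of the symplectomorphism gives the stated relation.

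The one point requiring care, and in that sense the main obstacle, is ensuring that every move in the proof of Theorem~\ref{thm:classification}—especially the stabilisation/destabilisation steps buried inside moves~I and~II and the realisation of the Tietze move {\bf T~2}—truly extends to a symplectomorphism of the filling and not merely to a contactomorphism of the boundary. This reduces to the already-cited symplectic handle-cancellation statement from~\cite{vkoe} together with Remark~\ref{rem:extension}; once these are in hand, no further argument is needed and the corollary follows.
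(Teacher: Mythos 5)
Your overall strategy -- extend every move in the proof of Theorem~\ref{thm:classification} to a symplectomorphism of the filling -- is precisely the paper's (its proof of the corollary is exactly the one-line observation that all the moves extend, in the sense of Remark~\ref{rem:extension}). However, your third step introduces a genuine error: you cannot first produce a symplectomorphism of the $4$-dimensional pages
\[ \Sigma\,\natural\, k(S^2\times D^2)\,\natural\,(S^2\,\tilde\times\,D^2)
\;\cong\;
\Sigma'\,\natural\, k'(S^2\times D^2)\,\natural\,(S^2\,\tilde\times\,D^2)\]
and then cross with $D^2$. The moves used in the proof of Theorem~\ref{thm:classification} change the diffeomorphism type of the page: move~I replaces $K$ by $S_+S_-K$ and so lowers the framing $\tb-1$ by two, and move~II as well as the Fuchs--Tabachnikov stabilisations (realised via Lemma~\ref{lem:realise_legendrian_stabilisation}) likewise change~$\tb$. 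Concretely, let $\Sigma$ and $\Sigma'$ be the disc bundles over $S^2$ of Euler numbers $-2$ and $-4$ (Legendrian unknots with $\rot=0$ and $\tb=-1$, respectively $\tb=-3$). By Proposition~\ref{prop:S2S3} both $W=\Sigma\times D^2$ and $W'=\Sigma'\times D^2$ fill $(S^2\times S^3,\xi_0)$ and have $\rank H_2=1$, but the page-level intersection forms after your boundary connected sums are $\langle -2\rangle\oplus 0^k\oplus\langle e\rangle$ versus $\langle -4\rangle\oplus 0^{k'}\oplus\langle e\rangle$ (with $\langle e\rangle$ the form of the twisted summand), which are never isomorphic; the pages are not even homotopy equivalent, so no such page-level symplectomorphism exists.

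The equivalence only appears one dimension up, and that is where the argument must be run: Legendrian isotopies and handle slides of attaching circles extend to symplectomorphisms of the $6$-dimensional filling by Remark~\ref{rem:extension}, while the (de)stabilisations hidden in moves I and~II correspond on the filling to attaching a cancelling pair consisting of a subcritical $2$-handle and a critical $3$-handle and cancelling it symplectically, as in~\cite{vkoe} -- an operation with no counterpart on the page (it is also the reason the even framing changes become invisible in dimension six). Your second paragraph already contains all the needed ingredients for this; once the moves are interpreted directly as symplectomorphisms of $W$ and $W'$ rather than of their pages, your argument, including the computation forcing $k-k'=\rank H_2(W')-\rank H_2(W)$, goes through and coincides with the paper's proof.
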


\begin{rem}
\label{rem:hprinciple}
It seems feasible to prove Theorem~\ref{thm:classification} by working
directly with the subcritical filling. The fundamental group
of the $6$-dimensional subcritical Stein manifold $\Sigma\times D^2$,
which equals that of~$M$, has a presentation with generators given by
the $1$-handles and relations given by the attaching circles of the
$2$-handles. In the $6$-dimensional Stein manifold we now have
an $h$-principle for the attaching maps.
\end{rem}
\section{Subcritical fillings of $S^5$}
\label{section:S5}
The purpose of the present section is to show how the
so-called Andrews--Curtis moves on balanced group
presentations can be realised as moves in our diagrams
for contact $5$-manifolds. As an application we prove
that any subcritically Stein fillable contact structure
on the $5$-sphere is the standard one, provided the
filling gives rise to an Andrews--Curtis trivial
presentation of the trivial group.

Let $\xi$ be a contact structure on $S^5$ admitting a subcritical
Stein filling~$W$. From the homology long exact sequence of
the pair $(W,S^5)$ we deduce that $W$ has the homology of
a point. By the result of Cieliebak~\cite{ciel02} we can write
$W=\Sigma\times D^2$. Hence $\pi_1(W)=\pi_1(\Sigma)=\pi_1(S^5)=\{ 1\}$,
cf.\ the proof of Lemma~\ref{lem:identical_presentations}.
From the $h$-cobordism theorem it follows that $W$ is diffeomorphic to~$D^6$.

Since the homology of $W$ can also be computed from a cellular
decomposition, it follows that a handlebody decomposition of $W$
with a single $0$-handle must contain an equal number
of $1$- and $2$-handles. This gives rise to a {\bf balanced
presentation} of the trivial group $\pi_1(W)$, i.e.\
a presentation containing as many relations as generators.

If $\partial\Sigma$ were the standard $3$-sphere, we could
conclude immediately that $\xi$ is the standard contact structure
on~$S^5$. Unfortunately, the boundary of a contractible $4$-manifold
$\Sigma$ (even with a finite handlebody decomposition as
described) will, in general, be some complicated homology
$3$-sphere, as pointed out in~\cite{gomp91}.
\subsection{Andrews--Curtis moves}
In the attempt to prove that $\xi$ is the standard contact structure
on~$S^5$, one can try to show that the balanced presentation
of the trivial group $\pi_1(W)$ can be converted to the empty
presentation via balanced presentations, and then to implement these
moves as transformations of the handlebody~$W$. As we shall see in the next
section, this topological implementation does not pose any difficulties.
The algebraic part of this strategy, however, remains unresolved
and forms the content of the Andrews--Curtis conjecture.
There are various forms of this conjecture; the following is the
weaker of the two versions given in~\cite{ancu65}.

\begin{conj}
Any balanced presentation
of the trivial group can be reduced to the empty presentation
(via balanced presentations) by the following transformations:
\begin{itemize}
\item[{\bf AC 1.}] Replace a relation by its inverse.
\item[{\bf AC 2.}] Conjugate a relation by a generator.
\item[{\bf AC 3.}] Replace one relation by its product with another
relation.
\item[{\bf AC 4.}] Add or remove a generator $g$ together with the
relation~$g$.
\end{itemize}
\end{conj}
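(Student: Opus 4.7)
The statement in question is the celebrated Andrews--Curtis conjecture, which has remained open since 1965 and is widely believed by many experts to be false. I will not pretend to have a proof; instead I sketch the natural lines of attack, indicate the main obstruction, and explain what a plan consistent with the methods of this paper would actually look like.

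The most direct strategy is combinatorial: define a complexity measure $c$ on balanced presentations --- say the total relator length, or some refinement weighting commutator structure --- and produce, for each balanced presentation of the trivial group with $c>0$, an explicit AC-move reducing $c$. Starting from $\langle g_1,\ldots,g_k\mid r_1,\ldots,r_k\rangle$ with trivial quotient, one would use moves AC 2 and AC 3 to rewrite some $r_i$ in a form where a generator, say $g_k$, occurs with exponent sum $\pm 1$ and with all other occurrences cancellable after conjugation, then invoke AC 4 to delete $g_k$ together with that relator. The hard part, and the reason this strategy has resisted solution for decades, is that for the classical candidate counterexamples --- the Akbulut--Kirby series, the Miller--Schupp presentations, the Miasnikov--Myasnikov families --- every known AC-trivialisation proceeds only after a drastic \emph{increase} in relator length, and no monotone complexity measure is known to decrease under AC moves.

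A topological strategy, closer in spirit to this paper, would translate a balanced presentation into a Kirby diagram for a contractible Stein surface $\Sigma$ with equal numbers of $1$- and $2$-handles, and try to find a sequence of \emph{balanced} handle moves reducing the diagram to the empty one. The moves of Section~\ref{section:realisingT} already implement the full Tietze set, so the plan would be to check that each Tietze reduction used to trivialise a balanced presentation can be performed while keeping the presentation balanced at every step. This is precisely the Andrews--Curtis restriction: one is forbidden to introduce a redundant relator (even temporarily) without cancelling a generator. Equivalently, via Wright--Zeeman, one asks whether every contractible presentation $2$-complex $3$-deforms to a point; the corresponding $4$-dimensional handle picture fails because contractible handlebodies can have exotic, non-simply-connected boundaries such as the Mazur manifolds studied by Akbulut--Kirby.

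In summary, the main obstacle is the absence of any known invariant of balanced presentations that is both nontrivial and forced to decrease under the moves AC 1--4. Any proof plan faithful to the conjecture's history would have to produce such an invariant --- something far beyond the scope of diagrammatic contact topology. The realistic contribution of this section is not to settle the conjecture but to reformulate it: combined with the classification in Theorem~\ref{thm:classification} and the realisation of AC moves as handle moves, the Andrews--Curtis conjecture becomes equivalent to a concrete statement about subcritical Stein fillings of $(S^5,\xi_{\mathrm{st}})$, which is presumably where the section is headed.
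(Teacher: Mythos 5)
You are right not to attempt a proof: this statement is the Andrews--Curtis conjecture, which the paper itself presents only as an open conjecture (explicitly noting it is unresolved and that potential counterexamples have been proposed in the work of Gompf), so there is no proof in the paper to compare against. Your reading of how the section uses it --- realising the moves {\bf AC 1}--{\bf AC 4} as diagram moves so that Andrews--Curtis triviality of the induced presentation forces the contact structure on $S^5$ to be standard --- matches the paper's intent exactly.
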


A balanced presentation of the trivial group is called
{\bf Andrews--Curtis trivial} if it can be reduced to the empty
presentation using Andrews--Curtis moves.
In other words, the Andrews--Curtis conjecture can be rephrased as
saying that any balanced presentation of the trivial group is
Andrews--Curtis trivial.

This conjecture is still unresolved, but potential counterexamples
have been suggested in~\cite{gomp91}.
\subsection{Realising the Andrews--Curtis moves}
The move {\bf AC~1} amounts to reversing the orientation of
the Legendrian attaching circle representing the
relation in question;
the moves {\bf AC~2} and {\bf AC~3} are the
Tietze moves {\bf T~1}~(ii) and {\bf T~1}~(iii), respectively;
the fourth Andrews--Curtis move is a special case of the
second Tietze move. So all the Andrews--Curtis moves can be
realised in our diagrams as in Section~\ref{section:realisingT}.

Some versions of the Andrews--Curtis conjecture also allow a
generator to be replaced by its product with another
generator, or by its inverse. These moves, too, can be realised
in our diagrams as follows.

Multiplying a generator by another
one amounts to a $1$-handle slide~\cite[Figure~5.2]{gost99}.
Such a $1$-handle slide can also be performed in a diagram with
Legendrian attaching circles.

In order to replace a generator $g$ by its inverse, one needs to
flip the two attaching balls of the $1$-handle corresponding
to~$g$. This can be done after sliding the Legendrian curves
that go over this $1$-handle halfway around the attaching balls,
cf.~\cite[Figure~16]{gomp98}.
\subsection{A characterisation of the standard~$S^5$}
With the Andrews--Curtis moves at our disposal, we can now
give a characterisation of the standard contact structure
on $S^5$ in terms of subcritical Stein fillings.

\begin{prop}
Let $\xi$ be a contact structure on $S^5$ with a subcritical
Stein filling~$W$. If $W$ admits a plurisubharmonic Morse function that
induces a handlebody decomposition of $W$ giving rise to an
Andrews--Curtis trivial presentation of the trivial group $\pi_1(W)$,
then $\xi$ is diffeomorphic to the standard contact structure
on~$S^5$.
\end{prop}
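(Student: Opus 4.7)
The plan is to combine Cieliebak's product description of subcritical fillings with the diagrammatic realisation of the Andrews--Curtis moves from the preceding subsection. By Cieliebak's theorem, $W$ is symplectomorphic to $\Sigma\times D^2$ for a Stein surface~$\Sigma$, so $(S^5,\xi)\cong \open(\Sigma,\id)$, and the given plurisubharmonic Morse function yields a Kirby diagram $D$ for~$\Sigma$ whose $1$- and $2$-handles induce the hypothesised Andrews--Curtis trivial balanced presentation of $\pi_1(\Sigma)=\pi_1(W)=\{1\}$. (Balancedness is automatic: since $W$ is contractible and subcritical, $\chi(W)=1$ forces the number of $2$-handles to equal the number of $1$-handles.)

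By hypothesis, a finite sequence of Andrews--Curtis moves reduces this presentation, through balanced presentations, to the empty one. I would carry out this sequence step by step, interpreting each move as a change of the Kirby diagram and invoking the realisations from the previous subsection: \textbf{AC~1} is the reversal of orientation of a Legendrian attaching circle; \textbf{AC~2} and~\textbf{AC~3} are the Tietze moves \textbf{T~1}\,(ii) and~(iii), i.e.\ a Legendrian isotopy and a handle slide; and \textbf{AC~4} is the special case of \textbf{T~2} with empty word~$w$, so no sliding of the $1$-handle's attaching ball is needed and one has a cancelling $1$--$2$-handle pair directly. By the results of Section~\ref{section:realisingT}, none of these operations changes the contactomorphism type of the corresponding open book.

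After executing the whole sequence we are left with a diagram containing no $1$- or $2$-handles, i.e.\ with $\Sigma$ replaced by $D^4$ with its standard Stein structure. Hence
\[ (S^5,\xi)\cong \open(D^4,\id)=\partial(D^4\times D^2), \]
equipped with the complex tangencies on $\partial D^6$, which is precisely the standard contact structure on~$S^5$.

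The main subtlety, and the reason the hypothesis is framed in terms of Andrews--Curtis rather than general Tietze moves, is that we must avoid invoking \textbf{T~1}\,(i) (doubling of a relation), whose diagrammatic realisation requires a connected sum with $(S^2\times S^3,\xi_0)$ and would only yield a contactomorphism after stabilisation by copies of this summand. The four Andrews--Curtis moves, by contrast, are all realisable by Legendrian isotopies, handle slides, and direct $1$--$2$-handle cancellations inside~$\Sigma$ itself, which is exactly what allows us to conclude contactomorphism with the standard $S^5$ on the nose.
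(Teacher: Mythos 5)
Your proposal is correct and follows essentially the same route as the paper: describe $(S^5,\xi)$ as $\open(\Sigma,\id)$ via the subcritical filling, realise the Andrews--Curtis moves diagrammatically (the point being that, unlike Tietze move {\bf T~1}~(i), none of them requires a connected sum with $(S^2\times S^3,\xi_0)$), and conclude that the resulting empty diagram represents the standard contact $5$-sphere. Your added remarks on Cieliebak's splitting, the automatic balancedness, and the role of {\bf AC~4} as {\bf T~2} with trivial word merely spell out what the paper leaves implicit.
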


\begin{proof}
The subcritical Stein manifold $W$ corresponds to a description of
$(S^5,\xi)$ as a contact open book $\open(\Sigma,\id)$ with a handlebody
decomposition of the page $\Sigma$ giving rise to an Andrews--Curtis
trivial presentation of the trivial group.
By realising the Andrews--Curtis moves that transform this
presentation to the empty one, we transform the open book
to the one described by the empty diagram, which represents
the standard contact structure on~$S^5$.
\end{proof}

Depending on one's predisposition, one may regard this
result as evidence for the conjecture that among the contact
structures on $S^5$ only the standard one admits a subcritical
Stein filling, or as a potential means for disproving the
Andrews--Curtis conjecture. Indeed, one way to read the
proposition is that any exotic (i.e.\ non-standard) contact
structure on $S^5$ with a subcritical Stein filling
$W$ gives rise to a balanced presentation of the trivial
group $\pi_1(W)$ that is not Andrews--Curtis trivial.
Unfortunately, the result of M.-L.\ Yau cited at the end
of Section~\ref{section:no1handles} implies that
cylindrical contact homology is not sensitive enough to detect
such examples.

Although the general belief appears to be that the Andrews--Curtis conjecture
is false, we should admit in all fairness that this suggested
strategy for disproving it is not the most promising one.
It seems more likely that methods such as those indicated in
Remark~\ref{rem:hprinciple} allow one
to give a direct proof that the standard contact structure
on $S^5$ is the only one admitting a subcritical Stein filling.
\section{Diagrams for simply connected $5$-manifolds}
\label{section:simply}
In this section we exhibit diagrams for contact structures on
some simply connected $5$-manifolds. As a corollary, we obtain
a branched covering  description of $5$-dimensional simply connected
spin manifolds, and a new proof that every simply connected $5$-manifold
admits a contact structure in each homotopy class of
almost contact structures.
\subsection{Barden's classification}
Barden~\cite{bard65} has given a complete classification of
simply connected $5$-manifolds. Here we are only interested in those
manifolds that potentially carry contact structures.
If a $5$-manifold $M$ admits a
contact structure, then its structure group reduces to
$\UU (2)\times 1$; such a reduction is called
an {\bf almost contact structure}. Necessary and sufficient for the
existence of an almost contact structure is the vanishing of
the third integral Stiefel--Whitney class~$W_3(M)$,
see~\cite[Proposition~8.1.1]{geig08}.

According to Barden's classification, any simply connected $5$-manifold
$M$ with $W_3(M)=0$ decomposes as the
connected sum of finitely many manifolds from the
following list of examples:

\begin{itemize}
\item[(i)] manifolds $M_k$, $k\in\N$, characterised by
$H_2(M_k)\cong\Z_k\oplus\Z_k$,
\item[(ii)] $S^2\times S^3$,
\item[(iii)] $S^2\,\tilde\times\,S^3$.
\end{itemize}

The manifold $M_1$ is the $5$-sphere; the manifolds $M_k$ are prime
if and only if $k$ is a prime power $p^j$, $j\geq 1$. One
obtains a unique prime decomposition of a given $M$
if one requires that only the $M_{p^j}$ and at most one
summand $S^2\,\tilde\times\,S^3$ are used,
cf.\ Proposition~\ref{prop:connect_sum_S2S3}.
All the prime manifolds in this list, with the exception of
$S^2\,\tilde\times\,S^3$, are spin manifolds, i.e.\ their second
Stiefel--Whitney class vanishes.
\subsection{Diagrams for the $M_k$}
\label{section:Mk}
Let $(N_k,\eta_k)$, $k\in\N$, be the contact $5$-manifold represented
by the diagram depicted in Figure~\ref{fig:diagramM_k}. Write
$\Sigma_k$ for the page of the open book represented
by this diagram, so that $(N_k,\eta_k)=
\open (\Sigma_k,(\tau_{K_1}\circ \tau_{K_2})^2)$.

\begin{figure}[htp]
\labellist
\small\hair 2pt
\pinlabel $K_1$ [br] at 46 163
\pinlabel $K_2$ [tr] at 46 71
\pinlabel {\scriptsize $k$ twists} at 152 140
\pinlabel $(\tau_{K_1}\circ\tau_{K_2})^2$ [l] at 255 60
\endlabellist
\centering
\includegraphics[scale=0.7]{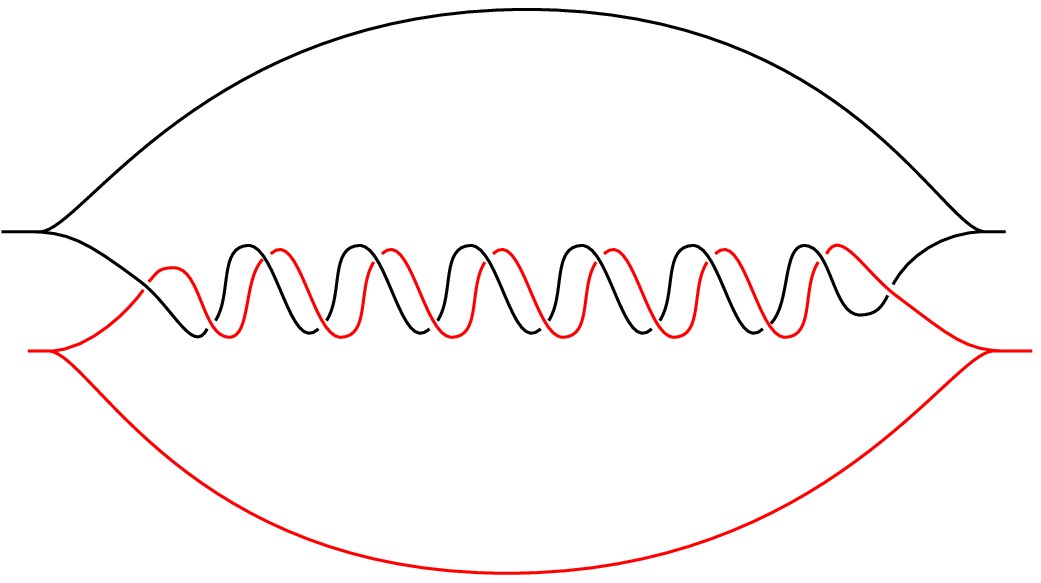}
  \caption{Diagram for $M_k$.}
  \label{fig:diagramM_k}
\end{figure}

\begin{prop}
\label{prop:M_kN_k}
For each $k\in\N$ the manifold $N_k$ is diffeomorphic to $M_k$.
\end{prop}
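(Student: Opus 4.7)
The plan is to invoke Barden's classification~\cite{bard65}: it will suffice to verify that $N_k$ is a simply connected spin $5$-manifold with $H_2(N_k;\Z)\cong\Z_k\oplus\Z_k$, since under these constraints $M_k$ is the unique such manifold in Barden's list (its prime decomposition into the $M_{p^j}$ being forced by the torsion decomposition of $H_2$, with no $S^2\times S^3$ summand because $H_2$ is finite and no $S^2\,\tilde\times\,S^3$ summand by the spin condition).

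First I would verify simple connectivity. The Kirby diagram for $\Sigma_k$ in Figure~\ref{fig:diagramM_k} contains no $1$-handles, so $\pi_1(\Sigma_k)=1$. Seifert--van Kampen applied to the open book decomposition $N_k=A\cup_\partial B$ with $B=\partial\Sigma_k\times D^2$ then yields $\pi_1(N_k)=1$: the circle generating the $\Z$-factor in $\pi_1(A)$ is killed because it bounds a disc in $B$, and any loop in $\partial\Sigma_k\subset A\cap B$ is already nullhomotopic in $A$ since $\pi_1(\Sigma_k)=1$.

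The core computation is that of $H_2(N_k)$. Following Section~\ref{section:homology_openbook}, the intersection form $Q$ on $H_2(\Sigma_k)\cong\Z^2$ is read off the diagram: each $K_i$ bounds an obvious Lagrangian disc giving a Lagrangian $2$-sphere of self-intersection $-2$ (as in the remark after move~I), while the off-diagonal entry of $Q$ is dictated by the linking number encoded in the $k$ twists. Plugging $Q$ into the formula of Section~\ref{section:action} (with sign $+1$, since $n=2$) yields explicit $2\times 2$ matrices for $(\tau_{K_1})_*$ and $(\tau_{K_2})_*$, and hence for $\psi_*=(\tau_{K_1}\circ\tau_{K_2})^2_*$. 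The Wang sequence then gives $H_*(A)$ in terms of $\ker(\psi_*-\id)$ and $\coker(\psi_*-\id)$, the homology of $B\simeq\partial\Sigma_k$ is obtained via $H_1(\partial\Sigma_k)\cong\coker Q$ as in Section~\ref{section:homology_openbook}, and the Mayer--Vietoris sequence of the decomposition $N_k=A\cup_\partial B$ assembles these pieces into $H_2(N_k)\cong\Z_k\oplus\Z_k$.

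Finally I would verify that $N_k$ is spin. Since $c_1$ of a Stein surface evaluates on the sphere associated to a Legendrian attaching knot as its rotation number (by \cite[Proposition 2.3]{gomp98}), inspecting the rotation numbers of $K_1$ and $K_2$ in the diagram shows $c_1(\xi)\in 2H^2(N_k)$, so $w_2(N_k)$, being the mod $2$ reduction of $c_1(\xi)$, vanishes. Combined with the previous two steps, Barden's classification identifies $N_k$ with $M_k$. The main obstacle will be the homological bookkeeping in the second step: one has to keep careful track of how the generators of $H_2(\Sigma_k,\partial\Sigma_k)$ and their boundary images in $H_1(\partial\Sigma_k)$ glue across the Mayer--Vietoris boundary map, and verify that the resulting torsion in $H_2(N_k)$ is \emph{exactly} $\Z_k\oplus\Z_k$ and not some other abelian group of the same order, or an abelian group of different order stacked with compensating kernel contributions from the Wang sequence.
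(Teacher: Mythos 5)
Your skeleton (simple connectivity, $H_2(N_k)\cong\Z_k\oplus\Z_k$, vanishing obstruction class, then Barden) matches the paper's, and the Wang/Mayer--Vietoris computation you sketch is indeed carried out there. But the step you yourself flag as the ``main obstacle'' --- showing that the torsion is \emph{exactly} $\Z_k\oplus\Z_k$ and not some other abelian group of order $k^2$ --- is a genuine gap, and your proposal contains no mechanism for closing it. The paper closes it with an idea absent from your plan: it constructs a Stein filling $W_k$ of $(N_k,\eta_k)$ from the subcritical filling $\Sigma_k\times D^2$ by attaching one $3$-handle for each Dehn twist in the monodromy (via Theorem~\ref{thm:Dehntwist_legendrian_surgery}); the first two $3$-handles cancel the $2$-handles of $\Sigma_k$, and the remaining two give $H_3(W_k)\cong\Z^2$ with a \emph{skew-symmetric} intersection form $\bigl(\begin{smallmatrix}0&l_k\\-l_k&0\end{smallmatrix}\bigr)$, whence $H_2(N_k)\cong\coker Q_{W_k}\cong\Z_{l_k}\oplus\Z_{l_k}$ for some $l_k$ automatically. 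After that, the open-book homology computation only has to pin down the \emph{order} of $H_2(N_k)$, which follows from $\lvert\coker\phi\rvert=\lvert\det\phi\rvert$. On your route you would instead have to compute the cokernel of $H_1(\partial\Sigma_k)\rightarrow H_2(A_k)$ explicitly, i.e.\ track how the tori $u_i\times S^1$ decompose against the spherical classes under the monodromy --- precisely the generator-level analysis the paper is forced to do only in the exceptional case $k=2$, where $\det Q_{\Sigma_k}=0$, $H_2(A_2)\cong\Z\oplus\Z_4$ has a free part, and the naive order count breaks down; your proposal does not single out this case either.

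A smaller point: your spin verification via rotation numbers is not sound as stated. The identification $c_1(\xi)=\rot(K)\,h$ in Proposition~\ref{prop:S2S3} rests on $M=\partial(\Sigma\times D^2)$, i.e.\ on trivial monodromy, which fails here; with the nontrivial monodromy $(\tau_{K_1}\circ\tau_{K_2})^2$ you have no such direct comparison between $c_1$ of the page and $c_1(\xi)$ on $N_k$. The paper avoids this entirely: $W_3(N_k)=0$ because $N_k$ carries a contact structure, and with the version of Barden's classification quoted in Section~\ref{section:simply} this, together with finiteness of $H_2$, already forces $N_k$ to be a connected sum of the $M_{p^j}$; alternatively, once $H_1(N_k)=0$ and $H_2(N_k)$ is finite, $H^2(N_k;\Z)=0$, so $c_1(\xi)=0$ and spin-ness is automatic.
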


One ingredient in the argument will be the following folklore
theorem, which is proved in~\cite{vkoe}; cf.\
\cite{geig} for a proof in the $3$-dimensional case.

\begin{thm}
\label{thm:Dehntwist_legendrian_surgery}
Let $L\subset\open(\Sigma,\psi)$ be a Legendrian
sphere in a contact open book that sits on a page of the
open book as a Lagrangian submanifold.
Then the contact manifold obtained by Legendrian surgery along
$L$ is contactomorphic to $\open(\Sigma,\psi\circ\tau_L)$,
where $\tau_L$ denotes the right-handed Dehn twist
along $L\subset\Sigma$.\qed
\end{thm}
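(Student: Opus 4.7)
The plan is to exhibit both contact manifolds in the statement---the Legendrian surgery of $\open(\Sigma,\psi)$ along $L$, and the open book $\open(\Sigma,\psi\circ\tau_L)$---as the contact boundary of one and the same symplectic manifold, obtained from a local filling of $\open(\Sigma,\psi)$ by a single Weinstein $(n+1)$-handle attachment along~$L$.

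First I would reduce to a local problem. By conjugating $\psi$ with a symplectic isotopy supported near~$L$ one may arrange that $\psi\equiv\id$ on a neighbourhood $U$ of $L$ in~$\Sigma$; this changes $\open(\Sigma,\psi)$ only up to contactomorphism. By the Weinstein Lagrangian neighbourhood theorem, $U$ is symplectomorphic to a disc subbundle of $T^*S^n$ with $L$ as the zero section. A neighbourhood $V$ of $L$ in $\open(\Sigma,\psi)$ is then contactomorphic to a neighbourhood of $L\times\{0\}$ in $T^*S^n\times\R$ with contact form $\lambda_{\mathrm{can}}+d\varphi$. Since both the Dehn twist $\tau_L$ and the Legendrian surgery along $L$ are supported near~$L$, outside $V$ the two contact manifolds of the statement already agree with $\open(\Sigma,\psi)$, so it suffices to produce a contactomorphism between their restrictions to $V$ and a small collar.

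Next I would introduce the $A_1$-Milnor fibration $\pi\co B^{2n+2}\subset\C^{n+1}\to\C$, $(z_1,\ldots,z_{n+1})\mapsto z_1^2+\cdots +z_{n+1}^2$, as a local symplectic filling. Its regular fibres are symplectomorphic to $T^*S^n$, matching our local page; the unique Lefschetz critical point has vanishing cycle the zero section~$L$; and by Picard--Lefschetz the monodromy around the singular value is precisely the right-handed Dehn twist~$\tau_L$. The crucial identification is that the Lefschetz thimble of $\pi$, namely the Lagrangian disc $\{(x_1,\ldots,x_{n+1})\in\R^{n+1}\co \sum x_i^2\in [0,t]\}$, is a Weinstein $(n+1)$-handle attached along~$L$ with precisely its Legendrian framing, both framings being induced by the canonical trivialisation of the symplectic normal bundle of $L$ inside its Weinstein neighbourhood. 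On the contact boundary this handle attachment is, by definition, Legendrian surgery along~$L$ in the sense of~\cite{wein91}.

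Finally I would glue. Outside $V$ one keeps the original mapping-torus description of $\open(\Sigma,\psi)$, and inside $V$ one replaces the trivial piece $T^*S^n\times\R$ by the $A_1$-Milnor piece just described. The resulting symplectic manifold carries a Lefschetz fibration over $D^2$ with regular fibre $\Sigma$ and one additional Lefschetz critical point whose vanishing cycle is~$L$. By construction its contact boundary is the open book $\open(\Sigma,\psi\circ\tau_L)$, while by the preceding identification the same manifold arises from a local filling of $\open(\Sigma,\psi)$ by a Weinstein $(n+1)$-handle attached along~$L$ with Legendrian framing, whose boundary is precisely the Legendrian surgery of $\open(\Sigma,\psi)$ along~$L$. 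The main obstacle is the framing comparison in the preceding paragraph---verifying that the framing supplied by the Lefschetz thimble coincides with the framing prescribed by Legendrian surgery---which is the classical Picard--Lefschetz computation in the Weinstein category carried out in detail in~\cite{vkoe}.
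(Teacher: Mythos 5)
The paper does not actually prove this statement: it is presented as a folklore theorem, stated with a q.e.d.\ box, and the proof is delegated to the lecture notes \cite{vkoe} (with \cite{geig} for the $3$-dimensional case). Your outline is, in essence, the argument those notes carry out: localise near $L$, model the handle attachment on the $A_1$-Milnor fibration $z\mapsto z_1^2+\cdots+z_{n+1}^2$, identify the Lefschetz thimble with a Weinstein $(n+1)$-handle attached with the Legendrian framing, and read off the extra right-handed Dehn twist from the Picard--Lefschetz monodromy; so the architecture is correct and matches the intended proof. Two caveats. First, your opening reduction is misstated: conjugating $\psi$ cannot in general make it the identity near $L$ (for instance if $\psi_*$ moves the class $[L]$ in $H_n(\Sigma)$, no conjugate of $\psi$ fixes a neighbourhood of $L$). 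Fortunately you do not need this: since $L$ lies on a single interior page, a neighbourhood of $L$ in $\open(\Sigma,\psi)$ is contactomorphic to a neighbourhood of the zero section in $(T^*S^n\times\R,\ \lambda_{\mathrm{can}}+d\varphi)$ for any monodromy whatsoever, which is all your localisation requires. Second, the step you yourself single out as the crux --- that the thimble framing agrees with the Legendrian surgery framing, and that the contact structure on the boundary of the resulting Lefschetz fibration is the one supported by the new open book --- is precisely what you defer to \cite{vkoe}, i.e.\ to the same reference the paper cites for the entire theorem. The proposal is therefore a correct skeleton of the known proof rather than an independent argument, and would need those two verifications written out to stand on its own.
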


\begin{proof}[Proof of Proposition~\ref{prop:M_kN_k}]
We construct a Stein filling $W_k$ of $(N_k,\eta_k)$ by
starting with the subcritical Stein manifold $\Sigma_k\times D^2$,
which is a filling for $\open (\Sigma_k,\id)$, and attaching a $3$-handle
for each Dehn twist. The first two $3$-handles corresponding to
$\tau_{K_1}$ and $\tau_{K_2}$ cancel the $2$-handles of~$\Sigma_k$,
so we obtain a $6$-dimensional disc $D^6$.

The further two $3$-handles corresponding to the iterated application
of $\tau_{K_1}$ and $\tau_{K_2}$ then yields the Stein filling~$W_k$.
This implies $H_3(W_k)\cong\Z^2$, and the skew-symmetric
intersection form $Q_{W_k}$ on $H_3(W_k)$ has to look like
\[ \left(\begin{array}{cc}
0    & l_k \\
-l_k & 0   \\
\end{array}\right) \]
for some $l_k\in\Z$. By the argument mentioned in
Section~\ref{section:homology_openbook} we conclude
\[ H_2(N_k)\cong H_2(\partial W_k)\cong\coker Q_{W_k}\cong
\Z_{l_k}\oplus\Z_{l_k}.\]

The manifold $N_k$ is obviously simply connected, and it satisfies
$W_3(N_k)=0$ because it carries a contact structure. So in order to
establish that $N_k$ is diffeomorphic to $M_k$ it suffices to
show, by Barden's classification, that $|l_k|=k$.

Decompose the open book $N_k$ as $N_k=A_k\cup_{\partial} B_k$ as
in Section~\ref{section:openbooks_defs}, with $A_k$ the mapping torus
of~$\Sigma_k$, and $B_k=\partial\Sigma_k\times D^2\simeq\partial\Sigma_k$.
We can then compute the homology of $N_k$
by the procedure outlined in Section~\ref{section:homology_openbook}.
In particular, we consider the Mayer--Vietoris sequence
\begin{multline*}
H_3(N_k)\longrightarrow H_2(A_k\cap B_k)\longrightarrow
H_2(A_k)\oplus H_2(B_k)\longrightarrow \\
H_2(N_k)\longrightarrow
H_1(A_k\cap B_k)\longrightarrow H_1(A_k)\oplus H_1(B_k).
\end{multline*}

In terms of the standard generators
of $H_2(\Sigma_k)$, the intersection form $Q_{\Sigma_k}$
is given by
\[ \left(\begin{array}{cc}
-2 & k  \\
k  & -2 \\
\end{array}\right). \]
Thus, as shown in Section~\ref{section:action}, the action of
the two Dehn twists on $H_2(\Sigma_k)$ is given by
\[ (\tau_{K_1})_*=
\left(\begin{array}{cc}
-1 & k \\
0  & 1
\end{array}\right)
\;\;\;
\mbox{\rm and}
\;\;\;
(\tau_{K_2})_*=
\left(\begin{array}{cc}
1 & 0  \\
k & -1
\end{array}\right),\]
hence
\[ (\tau_{K_1}\circ\tau_{K_2})^2_*=
\left(\begin{array}{cc}
k^4-3k^2+1 & 2k-k^3 \\
-2k+k^3    & -k^2+1
\end{array}\right). \]
From the Wang sequence of the mapping torus $A_k$ we then have
\[ H_2(A_k)\cong\coker
\left(\begin{array}{cc}
k^4-3k^2 & 2k-k^3 \\
-2k+k^3    & -k^2
\end{array}\right). \]

Given a homomorphism $\phi\co\Z^m\rightarrow\Z^m$ with $\det\phi\neq 0$,
a simple algebraic consideration shows that $\lvert\coker\phi\rvert=
\lvert\det\phi\rvert$.
This allows us to conclude that
\[ |H_2(A_k)|=k^2|k^2-4|\;\;\mbox{\rm for}\;\; k\neq 2.\]
For $k=2$ we obtain $H_2(A_2)\cong \Z\oplus\Z_4$.

From the K\"unneth theorem we have
\[ H_1(A_k\cap B_k)\cong H_0(\partial\Sigma_k)\oplus H_1(\partial\Sigma_k).\]
From the Wang sequence one sees that $H_1(A_k)$ is
generated by the class of $\{ p\}\times S^1$, where $p$ is
any point of~$\partial\Sigma_k$. Combining these two
pieces of information, one deduces that the homomorphism
\[ H_1(A_k\cap B_k)\longrightarrow H_1(A_k)\oplus H_1(B_k)\]
in the Mayer--Vietoris sequence is an isomorphism.

Similarly, we have
\[ H_2(A_k\cap B_k)\cong H_1(\partial\Sigma_k)\oplus
H_2(\partial\Sigma_k),\]
and the second summand $H_2(\partial\Sigma_k)$
maps isomorphically into the
second summand of $H_2(A_k)\oplus H_2(B_k)$ in the
Mayer--Vietoris sequence.

So that sequence tells us that $H_2(N_k)$ is a quotient of
$H_2(A_k)$, and hence at most of rank~$1$. Combining this
with the information that $H_2(N_k)$ is isomorphic to
$\Z_{l_k}\oplus\Z_{l_k}$, we conclude $l_k\neq 0$.
Since $H_1(N_k)=0$, we obtain $H^2(N_k)=0$ from the
universal coefficient theorem, whence $H_3(N_k)=0$ by
Poincar\'e duality. Thus, the Mayer--Vietoris sequence
reduces to
\[ 0\longrightarrow H_1(\partial\Sigma_k)\longrightarrow
H_2(A_k)\longrightarrow H_2(N_k)\longrightarrow 0.\]

Recall that $H_1(\partial\Sigma_k)\cong\coker Q_{\Sigma_k}$.
For $k\neq 2$ this is a finite group of order $|k^2-4|$.
It follows that $H_2(N_k)$ is a finite group of order $k^2$,
and hence $|l_k|=k$, as we wanted to show.

For $k=2$ the short exact sequence becomes
\[ 0\longrightarrow \Z\oplus\Z_2\longrightarrow
\Z\oplus\Z_4\longrightarrow H_2(N_k)\longrightarrow 0.\]
Recall that $H_1(\partial\Sigma_2)$ is generated by
meridional loops $u_1,u_2$ around $K_1,K_2$,
respectively. The homomorphism $H_1(\partial\Sigma_2)\rightarrow
H_2(A_2)$ is given by sending $u_i$ to the class represented by
the torus $u_i\times S^1\subset\partial\Sigma_2\times S^1\subset A_2$.
Now cut this torus along a meridian $u_i\times *$ and insert two
meridional discs in~$\Sigma_2$ (of opposite orientation); this gives
us a $2$-sphere in $A_2$ representing the same homology class. Now flow
one of the meridional discs and the cylindrical part of that
$2$-sphere along a vector field on the mapping torus that is transverse
to the fibres and whose time-1 map, say, gives the monodromy map
from a fibre to itself.

The geometric intersection number of the meridional disc $D_i$ to $u_i$
with the spherical generator $S_j$ of $H_2(\Sigma_2)$ corresponding to
$K_j$ (made up of a Seifert disc for $K_j$ and the core disc of the
$2$-handle) equals the Kronecker~$\delta_{ij}$. The self-intersection number
of the $S_i$ is $-2$, and the intersection number between $S_1$ and
$S_2$ equals $k=2$. Hence, with the observation from
Section~\ref{section:action} we infer that the monodromy
$(\tau_{K_1}\circ\tau_{K_2})^2$ acts on the $D_i$ as follows:
\[ \begin{array}{ccccc}
D_1 &\longmapsto & D_1           &\longmapsto & D_1+S_1        \\
    &\longmapsto & D_1+S_1+2S_2  &\longmapsto & D_1+4S_1+2S_2, \\
D_2 &\longmapsto & D_2+S_2       &\longmapsto & D_2+2S_1+S_2   \\
    &\longmapsto & D_2+2S_1+4S_2 &\longmapsto & D_2 +6S_1+4S_2.
\end{array} \]
The homology group $H_2(A_2)\cong\Z\oplus\Z_4$ is generated by the classes
of $S_1$ and $S_2$, subject to the relation $4(S_1+S_2)=0$.
So the effect of the monodromy homomorphism can also be written as
\[ \begin{array}{ccc}
D_1 & \longmapsto & D_1-2S_2,\\
D_2 & \longmapsto & D_2+2S_1.
\end{array} \]
It follows that the $2$-torus $u_1\times S^1$ (resp.\ $u_2\times S^1$)
is homologically equivalent to $-2S_2$ (resp.\ $2S_1$) in~$A_2$.
Thus, in terms of suitable generators,
the homomorphism $\Z\oplus\Z_2\rightarrow\Z\oplus\Z_4$
in the Mayer--Vietoris sequence for $N_2$ is multiplication by~$2$,
hence $H_2(N_2)\cong\Z_2\oplus\Z_2$, i.e.\ $|l_2|=2$.
\end{proof}
\subsection{Spin manifolds as branched covers}
Here is an amusing corollary of the above example.

\begin{cor}
Any closed, simply connected $5$-dimensional spin manifold is a double
branched cover of the $5$-sphere.
\end{cor}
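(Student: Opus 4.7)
The plan is to use Barden's classification to reduce to the prime summands, exhibit each as a double branched cover of $S^5$, and then assemble by tubing the branch loci together inside $S^5$.

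By Barden, a closed simply connected spin $5$-manifold $M$ decomposes as
\[ M \cong \#_i M_{k_i}\,\#\,j(S^2\times S^3), \]
with no $S^2\,\tilde\times\,S^3$ summand (it is not spin). For each factor $M_k$, Proposition~\ref{prop:M_kN_k} gives $M_k\cong N_k=\open(\Sigma_k,(\tau_{K_1}\circ\tau_{K_2})^2)$. Now the double cover of any open book $\open(\Sigma,\psi)=A\cup_{\partial}B$ branched along its binding $\partial\Sigma$ is precisely $\open(\Sigma,\psi^2)$: over $B=\partial\Sigma\times D^2$ one takes the standard cover $D^2\to D^2$, $z\mapsto z^2$, branched at the origin, and over the mapping torus $A$ one doubly wraps the $S^1$-direction, obtaining the mapping torus of $\psi^2$. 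Hence $M_k$ is the double cover of $\open(\Sigma_k,\tau_{K_1}\circ\tau_{K_2})$ branched along $\partial\Sigma_k$. By Theorem~\ref{thm:Dehntwist_legendrian_surgery} this latter open book arises from $\open(\Sigma_k,\id)$ by two Legendrian surgeries, one along each $K_i$; at the level of the subcritical Stein filling $\Sigma_k\times D^2$ these are two $3$-handle attachments which cancel the two $2$-handles of $\Sigma_k$. The filling collapses to $D^6$, so its boundary is $S^5$.

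For the $S^2\times S^3$ summands I run the analogous construction with page $T^*S^2$ and monodromy $\tau_L^2$, where $L$ is the zero section. The $3$-handle for $\tau_L$ cancels the unique $2$-handle of $T^*S^2$, so $\open(T^*S^2,\tau_L)\cong S^5$ and hence $X:=\open(T^*S^2,\tau_L^2)$ is a double branched cover of $S^5$. To identify $X$, a routine Wang and Mayer--Vietoris calculation yields $\pi_1(X)=0$ and $H_2(X)\cong\Z$; and $X$ bounds the $6$-manifold $T^*S^2\times D^2$ with two $3$-handles attached, which is spin because $c_1(T^*S^2)=-2[S^2]$ is even and $3$-handles in dimension~$6$ preserve spinness (their attaching sphere $S^2$ carries a unique spin structure extending over~$D^3$). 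Barden's classification then forces $X\cong S^2\times S^3$.

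It remains to assemble the prime factors. Given double branched covers $p_i\co M_i\to S^5$ with branch loci $B_i\subset S^5$, choose balls $D^5_i\subset S^5$ meeting $B_i$ in a standardly embedded $D^3$. The pair connected sum $(S^5,B_1)\,\#\,(S^5,B_2)=(S^5,B_1\# B_2)$ has double branched cover $M_1\# M_2$, because the double cover of the standard pair $(D^3\times D^2,D^3\times\{0\})$ is $D^3\times D^2$ itself and the common boundary pair $(S^4,S^2)$ lifts to an $S^4$ along which the covers glue. Iterating over all summands presents $M$ as a double branched cover of $S^5$ along a connected sum of the $\partial\Sigma_{k_i}$'s together with $j$ copies of $\R P^3=\partial T^*S^2$. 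The step most in need of care is the identification $X\cong S^2\times S^3$ above: the spin condition is essential, for it rules out the alternative $S^2\,\tilde\times\,S^3$, which shares the invariants $\pi_1=0$ and $H_2\cong\Z$.
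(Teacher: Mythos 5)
Your argument is correct and shares the paper's core engine -- the Smale/Barden decomposition, the realisation of each $M_k$ as $\open(\Sigma_k,\psi^2)$ via Proposition~\ref{prop:M_kN_k}, and the observation that the double cover of $\open(\Sigma,\psi)$ branched along the binding is $\open(\Sigma,\psi^2)$ -- but it deviates from the paper in two places. First, the paper identifies $\open(DT^*S^2,\tau^2)$ with $S^2\times S^3$ by quoting that $\tau^2$ is isotopic to the identity rel boundary (so the open book is $\open(DT^*S^2,\id)$, handled by Proposition~\ref{prop:S2S3}); you instead pin down $X=\open(DT^*S^2,\tau^2)$ by computing $\pi_1$ and $H_2$ and invoking spinness of a bounding handlebody plus Barden. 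That works, and avoids the isotopy input, at the cost of the auxiliary filling argument; note, though, that your justification ``$c_1(T^*S^2)=-2[S^2]$ is even'' is misstated -- for the Stein structure on the $(-2)$-disc bundle one has $c_1=0$ (rotation number zero), but $w_2=0$ in any case, so the spin conclusion stands. Second, the paper assembles the summands \emph{before} covering: it uses the book connected sum of Section~\ref{section:sum} to write $M$ itself as a single open book with globally squared monodromy, and then recognises the quotient open book as a right-handed stabilisation of $\open(D^4,\id)\cong S^5$; you instead cover each prime summand separately (identifying each base with $S^5$ through the handle-cancellation already asserted in the proof of Proposition~\ref{prop:M_kN_k}, via Theorem~\ref{thm:Dehntwist_legendrian_surgery}) and then glue the covers by an equivariant pair connected sum along balls meeting the branch loci standardly. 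This is a standard and correct manoeuvre; the paper's route is slightly cleaner in that it produces the branch locus as a single binding and needs the branched-cover observation only once, whereas yours makes the branch locus explicitly a connected sum of the $\partial\Sigma_{k_i}$ with copies of $\R P^3$ and requires the (routine) compatibility of branched covers with pair connected sums.
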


\begin{proof}
By Barden's classification, any closed, simply connected
$5$-dimensional spin manifold $M$ can be decomposed as
\[ M\cong \#_mS^2\times S^3\# M_{k_1}\#\cdots\# M_{k_n}.\]
This classification of {\em spin\/} manifolds had actually been
achieved earlier by Smale~\cite{smal62}.
By the preceding section, each $M_{k_i}$ is diffeomorphic
to an open book $\open (\Sigma_{k_i},\psi_i^2)$.
The manifold $S^2\times S^3$ can likewise be written as an open book
with quadratic monodromy. Namely, as page take
the cotangent unit disc bundle $DT^*S^2$
of~$S^2$. With Proposition~\ref{prop:S2S3} the
manifold $\open (DT^*S^2,\id)$
can be shown to be diffeomorphic to $S^2\times S^3$.
Let $\tau$ be a right-handed
Dehn twist along the zero section of~$DT^*S^2$. Since $\tau^2$
is isotopic, relative to the boundary, to the identity map,
it follows that $S^2\times S^3$ is diffeomorphic to
$\open (DT^*S^2,\tau^2)$, see \cite{vkni05}
and~\cite[p.~36]{hima68}. So $M$ can be written as
\[ M\cong\open (\natural_mDT^*S^2\natural\Sigma_{k_1}\natural
\cdots\natural\Sigma_{k_n},\natural_m\tau^2\natural
\psi_1^2\natural\cdots\natural\psi_n^2).\]
This implies that $M$ is the double branched cover of
\[ \open (\natural_mDT^*S^2\natural\Sigma_{k_1}\natural
\cdots\natural\Sigma_{k_n},\natural_m\tau\natural
\psi_1\natural\cdots\natural\psi_n),\]
branched along the binding of the open book. That last open book,
however, is a right-handed stabilisation of
$\open (D^4,\id)$, which is diffeomorphic to the $5$-sphere.
\end{proof}
\subsection{Existence of contact structures}
The following theorem was first proved by the second author~\cite{geig91},
using contact surgery. The second proof was given by the third
author~\cite{vkoe08}, using open book decompositions.
Here we give a diagrammatic proof.

\begin{thm}
Every closed, oriented, simply connected $5$-manifold admits a contact
structure in each homotopy class of almost contact structures.
\end{thm}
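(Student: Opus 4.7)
The plan is to combine Barden's classification with the diagrammatic constructions developed in the preceding sections, using that on a closed simply connected $5$-manifold the homotopy class of an almost contact structure is determined by its first Chern class $c_1(\xi)\in H^2(M;\Z)$, subject only to the compatibility condition $c_1(\xi)\equiv w_2(M)\pmod 2$. Granting this classical fact, it suffices to realise, on each building block of Barden's list, every cohomology class satisfying this parity condition as the first Chern class of some contact structure, and then to combine these via contact connected sum.

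First, since $M$ carries an almost contact structure we have $W_3(M)=0$, so by Barden's classification $M$ splits as a connected sum of the prime summands $M_{p^j}$, $S^2\times S^3$, and at most one copy of $S^2\,\tilde\times\,S^3$. The book connected sum of Section~\ref{section:sum} is compatible with the Giroux correspondence and, at the level of diagrams, amounts to placing the individual Kirby diagrams side by side separated by a hyperplane. Under this operation the first Chern class of the resulting contact structure is the direct sum of the Chern classes of the summands in $H^2(\#M_i;\Z)\cong\bigoplus H^2(M_i;\Z)$, so it is enough to handle each prime summand individually and with arbitrary $c_1$ compatible with~$w_2$.

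For $S^2\times S^3$ and $S^2\,\tilde\times\,S^3$ the integer family of contact structures $\xi_n$ constructed in Proposition~\ref{prop:S2S3}, which by its proof satisfies $c_1(\xi_n)=n\cdot h$ with $h$ the positive generator of~$H^2$, already realises every allowed first Chern class (even~$n$ in the first case where $w_2=0$, odd~$n$ in the second where $w_2\neq 0$); the sign of the generator is irrelevant by the remark following Proposition~\ref{prop:S2S3}. For the $M_k$ summands we begin with the open book $\open(\Sigma_k,(\tau_{K_1}\circ\tau_{K_2})^2)$ of Figure~\ref{fig:diagramM_k}, which by Proposition~\ref{prop:M_kN_k} is diffeomorphic to~$M_k$. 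By the same computation as in the proof of Proposition~\ref{prop:S2S3}, the first Chern class of the resulting contact structure is read off, via $c_1(\xi)=i^*c_1(\Sigma_k\times D^2)=i^*c_1(\Sigma_k)$ and Gompf's formula $c_1(\Sigma_k)=\sum\rot(K_i)\,h_i$, from the rotation numbers of the two attaching circles~$K_1,K_2$. One can adjust these rotation numbers freely (subject to the parity constraint $\tb+\rot\equiv 1\pmod 2$ imposed by Legendrianness) by inserting additional zigzags in the front projections of $K_1$ and~$K_2$; this modifies $\tb$ but, provided one retains the same underlying Lagrangian spheres, does not change the diffeomorphism type of the open book or its filling, and allows us to hit every element of $H^2(M_k;\Z)\cong\Z_k\oplus\Z_k$ that is compatible with $w_2(M_k)=0$.

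The main obstacle is this last point: one must verify that the proposed Legendrian stabilisations of $K_1,K_2$ in the $M_k$-diagram do not destroy the monodromy description, and that the resulting first Chern classes do realise every allowed element of $H^2(M_k;\Z)$. For the first issue one appeals to the fact that Legendrian stabilisation leaves the Lagrangian sphere capping off the attaching circle smoothly isotopic to the old one, so the Dehn twists $\tau_{K_i}$ in the monodromy remain well-defined and the diffeomorphism type computed in Proposition~\ref{prop:M_kN_k} is preserved; for the second, one combines the description of $H^2(M_k)$ in terms of the intersection form with Gompf's formula and the freedom to add zigzags to both knots independently, noting that this surjects onto the subgroup of $H^2(M_k;\Z)$ reducing to zero mod~$2$ (which is the full group if $k$ is odd, and of index~$4$ but equal to the kernel of mod-$2$ reduction if $k$ is even). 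Putting the building blocks together via connected sum, and using Proposition~\ref{prop:connect_sum_S2S3} to absorb any spurious $S^2\,\tilde\times\,S^3$-summands, produces a contact structure on $M$ with prescribed~$c_1$, and hence in every homotopy class of almost contact structures.
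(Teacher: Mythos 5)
Your overall strategy (classify almost contact structures by $c_1$, decompose $M$ by Barden, realise all allowed Chern classes on each prime block, combine by book connected sum) is exactly the paper's, and your treatment of the $S^2\times S^3$ and $S^2\,\tilde\times\,S^3$ blocks via Proposition~\ref{prop:S2S3} is fine. The gap is in the $M_k$ blocks, where your argument both proves the wrong statement and uses a step that fails. First, $H^2(M_k;\Z)$ is \emph{not} $\Z_k\oplus\Z_k$: since $M_k$ is simply connected and $H_2(M_k)\cong\Z_k\oplus\Z_k$ is pure torsion, the universal coefficient theorem gives $H^2(M_k;\Z)\cong\mathrm{Hom}(H_2(M_k),\Z)\oplus\mathrm{Ext}(H_1(M_k),\Z)=0$ (you have conflated $H_2$ with $H^2$). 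Consequently each $M_k$ carries a \emph{unique} homotopy class of almost contact structures, which is realised by the contact structure $\eta_k$ of Proposition~\ref{prop:M_kN_k} with no further adjustment; this is precisely how the paper concludes.

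Second, the mechanism you propose for ``adjusting $c_1$'' on $M_k$ -- adding zigzags to $K_1,K_2$ while ``retaining the same underlying Lagrangian spheres'' -- does not work as stated. A Legendrian stabilisation changes $\tb$, hence the smooth framing of the attached $2$-handle, hence the diffeomorphism type of the page $\Sigma_k$; moreover the capped-off spheres support the Dehn twists $\tau_{K_i}$ only when they have $T^*S^2$-neighbourhoods, i.e.\ self-intersection $-2$, which pins down the framing (cf.\ the remark following Move~I), so after stabilisation the monodromy description is no longer available in the same form. Note also that because of the parity constraint $\tb+\rot\equiv 1\pmod 2$ you cannot change $\rot$ by zigzags while keeping $\tb$ fixed, so ``adjusting the rotation numbers freely'' at fixed page is not possible anyway. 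Fortunately this entire step is unnecessary: once you observe $H^2(M_k;\Z)=0$, your proof collapses to the paper's argument and the theorem follows.
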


\begin{proof}
Homotopy classes of almost contact structures on oriented, simply connected
$5$-manifolds are classified by the first Chern class,
cf.~\cite[Proposition~8.1.1]{geig08}. So each $M_k$ in
Barden's classification admits a unique almost contact structure
up to homotopy (for either of its orientations). The result now follows
from Section~\ref{section:Mk} and Proposition~\ref{prop:S2S3}.
\end{proof}
\begin{ack}
F.~D.\ is partially supported by
grant no.\ 10631060 of the National Natural Science Foundation
of China.

This project was initiated during the conference
``Symplectic Topology, Contact Topology, and Applications''
at Hokkaido University, organised by O.~v.~K.\ (then a post-doc
at Hokkaido University) and funded by JSPS grant 19$\cdot$07804.

Some parts of this research were done while
H.~G.\ and O.~v.~K.\ attended the workshop ``Symplectic Techniques in
Conservative Dynamics'' at the Lorentz Center, Leiden, organised by
V.~Ginzburg, F.~Pasquotto, B.~Rink and R.~Vandervorst. We thank the
organisers for this opportunity to meet, and the Lorentz Center for
the excellent working environment.

The final writing was done while F.~D.\ stayed at the Universit\"at zu
K\"oln, supported by a DAAD -- K.~C.~Wong fellowship, grant no.\
A/09/99005.
\end{ack}

\end{document}